\documentclass[11pt]{amsart}

\usepackage[draft]{changes}

\definechangesauthor[color=red]{pe}
\definechangesauthor[color=red]{xm}
\definechangesauthor[color=red]{hx}
\newcommand{\stkout}[1]{\ifmmode\text{\sout{\ensuremath{#1}}}\else\sout{#1}\fi}
\setdeletedmarkup{\stkout{#1}}
\colorlet{Changes@Color}{red}



\usepackage{amssymb}

\usepackage{graphics}
\usepackage{graphicx}
\usepackage{amsmath}
\usepackage{amsthm}
\usepackage{amsfonts}
\usepackage{mathtools}
\usepackage{xcolor}
\usepackage[english]{babel}
\usepackage[margin=1.0in]{geometry}
\usepackage[colorlinks=true,
        linkcolor=blue]{hyperref}
\usepackage{bbm}
\usepackage{verbatim}
\usepackage{extarrows}
\usepackage{blkarray}
\usepackage{tensor}


\usepackage[T1]{fontenc}

\parindent 0 mm
\parskip 3 mm

\numberwithin{equation}{section}

\newtheorem{prop}{Proposition}
\newtheorem{lemma}[prop]{Lemma}

\newtheorem{thm}[prop]{Theorem}
\newtheorem*{thm*}{Theorem}

\numberwithin{prop}{section}

\newtheorem{defn}[prop]{Definition}
\newtheorem*{defn*}{Definition}
\theoremstyle{definition}

\newtheorem*{ex*}{Example}
\newtheorem{rmk}[prop]{Remark}

\definecolor{c1}{rgb}{0.2,0.4,0.5}
\definecolor{c2}{rgb}{0.1,0.3,0.5}
\definecolor{c3}{rgb}{0.2,0.7,0.5}
\usepackage{tikz}

\def \k {K\"ahler }
\def \ke {K\"ahler--Einstein }
\newcommand{\oo}[1]{\overline{#1}}

\newcommand{\nab}{\nabla}

\newcommand{\bC}{\mathbb{C}}

\newcommand{\dbar}{\oo\partial}


\DeclareFontFamily{U}{MnSymbolC}{}
\DeclareSymbolFont{MnSyC}{U}{MnSymbolC}{m}{n}
\DeclareFontShape{U}{MnSymbolC}{m}{n}{
	<-6>  MnSymbolC5
	<6-7>  MnSymbolC6
	<7-8>  MnSymbolC7
	<8-9>  MnSymbolC8
	<9-10> MnSymbolC9
	<10-12> MnSymbolC10
	<12->   MnSymbolC12}{}
\DeclareMathSymbol{\intprod}{\mathbin}{MnSyC}{'270}

\DeclareMathOperator{\tr}{tr}

\DeclareMathOperator{\wt}{wt}

\begin{document}

\title[]{On the analytic and geometric aspects of obstruction flatness}

\begin{abstract} 
In this paper, we investigate analytic and geometric properties of obstruction flatness of strongly pseudoconvex CR hypersurfaces of dimension $2n-1$. Our first two results concern local aspects. Theorem \ref{thm:obsosc} asserts that any strongly pseudoconvex CR hypersurface $M\subset \bC^n$ can be osculated at a given point $p\in M$ by an obstruction flat one up to order $2n+4$ generally and $2n+5$ if and only if $p$ is an obstruction flat point. In Theorem \ref{existence of obstruction flat hypersurface with transverse symmetry Prop}, we show that locally there are non-spherical but obstruction flat CR hypersurfaces with {\em transverse symmetry} for $n=2$. The final main result in this paper concerns the existence of obstruction flat points on compact, strongly pseudoconvex, 3-dimensional CR hypersurfaces.  Theorem \ref{existence of obstruction flat points on circle bundle Prop} asserts that the unit sphere in a negative line bundle over a Riemann surface $X$ always has at least one circle of obstruction flat points.
\end{abstract}

\subjclass[2020]{32W20 32V15 32Q20}


\author [Ebenfelt]{Peter Ebenfelt}
\address{Department of Mathematics, University of California at San Diego, La Jolla, CA 92093, USA}
\email{{pebenfelt@ucsd.edu}}

\author[Xiao]{Ming Xiao}
\address{Department of Mathematics, University of California at San Diego, La Jolla, CA 92093, USA}
\email{{m3xiao@ucsd.edu}}

\author [Xu]{Hang Xu}
\address{School of Mathematics (Zhuhai), Sun Yat-sen University, Zhuhai, Guangdong 519082, China}
\email{{xuhang9@mail.sysu.edu.cn}}

\thanks{The first author was supported in part by the NSF grant DMS-1900955 and DMS-2154368. The second author was supported in part by the NSF grants DMS-1800549 and DMS-2045104. The third author was supported in part by the NSFC grant No. 12201040.}

\maketitle
	

\section{Introduction}


		
	

Let $\Omega\subset \mathbb{C}^n$ be a smoothly bounded, strongly pseudoconvex domain and consider the Dirichlet problem
\begin{equation}\label{Dirichlet problem}
\begin{dcases}
J(u):=(-1)^n \det \begin{pmatrix}
u & u_{\bar z_k}\\
u_{z_j} & u_{z_j\bar z_k} \\
\end{pmatrix}=1 & \mbox{in } \Omega\\
u=0 & \mbox{on } \partial\Omega \\
u>0 & \mbox{in } \Omega.
\end{dcases}
\end{equation}
If $u$ is a solution of \eqref{Dirichlet problem}, then $-\log u$ is the \k potential of a complete \ke metric in $\Omega$ with negative Ricci constant. The existence and uniqueness of a solution $u\in C^{\infty}(\Omega)$ to \eqref{Dirichlet problem} was established by S.-Y. Cheng and S.-T. Yau \cite{CheYau}. Prior to that, C. Fefferman \cite{Fe2} had constructed approximate solutions $\rho\in C^{\infty}(\oo{\Omega})$ to \eqref{Dirichlet problem} that satisfy $J(\rho)=1+O(\rho^{n+1})$, and shown that such $\rho$ are unique modulo $O(\rho^{n+2})$. Fefferman's approximate solutions $\rho$ are called {\em Fefferman defining functions} and the exact solution $u$ is referred to as the {\em Cheng--Yau solution}. Subsequently,
J. Lee and R. Melrose \cite{LeeMel82} showed that the Cheng--Yau solution $u$ admits an asymptotic expansion of the form
\begin{equation}\label{Lee--Melrose expansion}
u\sim \rho\sum_{k=0}^{\infty} \eta_k\bigl(\rho^{n+1}\log\rho\bigr)^k,
\end{equation}
where each $\eta_k$ is in $C^{\infty}(\oo{\Omega})$ and $\rho$ is a Fefferman defining function.
It follows from \eqref{Lee--Melrose expansion} that, in general, the Cheng--Yau solution $u$ can only possess a finite degree of boundary smoothness; namely, $u$ is in $C^{n+2-\varepsilon}(\oo{\Omega})$ for any $\varepsilon>0$. C. R. Graham discovered that in the expansion (\ref{Lee--Melrose expansion}), the restriction of $\eta_1$ to the boundary, $\eta_1|_{\partial\Omega}$, turns out to be precisely the obstruction to $C^{\infty}$ boundary regularity of the Cheng--Yau solution. To be more precise, in \cite{Graham1987a} Graham proved that if $\eta_1|_{\partial\Omega}$ vanishes identically on an open subset $U\subset \partial\Omega$, then $\eta_k$ vanishes to infinite order on $U$ for every $k\geq 1$. For this reason, $\eta_1|_{\partial\Omega}$ is called the \emph{obstruction function}. 

Graham also showed ({\em op.\ cit.}) that, for any $k\geq 1$, the coefficients $\eta_k$ mod $O(\rho^{n+1})$ are independent of the choice of Fefferman defining function $\rho$ and are determined near a point $p\in\partial\Omega$ by the local CR geometry of $\partial\Omega$ near $p$. As a consequence, the $\eta_k$ mod $O(\rho^{n+1})$, for $k\geq 1$, are local CR invariants that can be defined on any strongly pseudoconvex CR hypersurface in a complex manifold. In particular, the obstruction function $\mathcal{O}:=\eta_1|_{\partial\Omega}$ is a local CR invariant that can be defined on any such strongly pseudoconvex CR hypersurface $M$ (see Graham \cite{Graham1987a}). If the obstruction function $\mathcal{O}$ vanishes at $p \in M$, then $p$ is called an \emph{obstruction flat point}. If $\mathcal{O}$
vanishes identically on $M$, then $M$ is said to be \emph{obstruction flat}. We note that the definition of obstruction flat point only requires the obstruction function $\mathcal{O}$ to vanish rather than being flat, but the terminology is motivated by Theorem \ref{thm:obsosc} below. 

The most basic examples of obstruction flat hypersurfaces are the unit sphere in $\bC^n$ and, more generally, any spherical CR hypersurface; recall that a CR hypersurface $M$ is called {\em spherical} if, at every $p\in \Sigma$, there is a neighborhood of $p$ that is CR diffeomorphic to an open piece of the sphere. Graham \cite{Graham1987a, Graham1987b} showed that locally there are many non-spherical but obstruction flat hypersurfaces. There are, however, no known examples of smoothly bounded, strongly pseudoconvex domains $\Omega\subset \mathbb{C}^n$ whose boundaries are non-spherical but obstruction flat. In the more general context of compact strongly pseudoconvex CR manifolds (always of hypersurface type in this paper) $M=M^{2n-1}$ of dimension $2n-1$, there are non-spherical but obstruction flat examples for $n\geq 3$ (i.e., $\dim M\geq 5$), as was recently demonstrated by the authors \cite{EXX22}, but there are no known examples of such for $n=2$ ($\dim M=3$). Indeed, the examples constructed in \cite{EXX22} for $n\geq 3$ are unit spheres in negative line bundles over compact K\"ahler manifolds of dimension $n-1$ and, for $n=2$, non-spherical but obstruction flat examples cannot exist in this context by a result of the first-named author \cite{Ebenfelt18}. 

If a compact, strongly pseudoconvex, 3-dimensional CR manifold $M$ bounds a domain in a complex manifold, the obstruction function $\mathcal O$ coincides (modulo a universal non-zero constant) with the boundary trace of the log term $\psi_B$ in Fefferman's asymptotic expansion of the Bergman kernel. It has been conjectured that every such compact, strongly pseudoconvex, 3-dimensional CR manifold $M$ which is obstruction flat is also spherical. This is often referred to as the {\em Strong Ramadanov Conjecture}, so named in accordance with the classical Ramadanov Conjecture, which asserts that $M$ is spherical if $\psi_B$ vanishes to {\em infinite order} along $M$. The Ramadanov Conjecture for boundaries $M$ of domains in 2-dimensional complex manifolds has been established in \cite{Graham1987b, Bou}, where it suffices to assume that $\psi_B$ vanishes to {\em second order} along $M$. Progress and further discussion of the Strong Ramadanov Conjecture can be found in \cite{Ebenfelt18,CE-AM2019, CE-AJM2021, CE-Crelle2021}.

In this paper, we investigate analytic and geometric properties of obstruction flatness of strongly pseudoconvex CR hypersurfaces of dimension $2n-1$. Our first two results concern local aspects. Theorem \ref{thm:obsosc} asserts that any strongly pseudoconvex CR hypersurface $M\subset \bC^n$ can be osculated at a given point $p\in M$ by an obstruction flat one up to order $2n+4$ generally and $2n+5$ if and only if $p$ is an obstruction flat point. We recall that osculation by spherical CR hypersurfaces is possible up to order $4$ ($6$ for $n=2$) generally and $5$ ($7$ for $n=2$) if and only if $p$ is CR umbilical. In Theorem \ref{existence of obstruction flat hypersurface with transverse symmetry Prop}, we show that locally there are non-spherical but obstruction flat CR hypersurfaces with {\em transverse symmetry} for $n=2$. For $n\geq 3$, there are even compact ones, as demonstrated by the examples constructed in \cite{EXX22}, but for $n=2$, any compact obstruction flat $M$ with a transverse symmetry is necessarily spherical \cite{Ebenfelt18}. Graham's local construction of non-spherical but obstruction flat CR hypersurfaces $M$ does not easily lend itself to ensuring that $M$ has a transverse symmetry. Thus, Theorem \ref{existence of obstruction flat hypersurface with transverse symmetry Prop} clarifies the necessity of compactness for the result in \cite{Ebenfelt18}. The final main result in this paper concerns the existence of obstruction flat points on compact, strongly pseudoconvex, 3-dimensional CR hypersurfaces. The analogous question of existence of umbilical points was raised by J. K. Moser and S.-S. Chern \cite{ChMo74}. Theorem \ref{existence of obstruction flat points on circle bundle Prop} asserts that the unit sphere in a negative line bundle over a Riemann surface $X$ always has at least one circle of obstruction flat points. The corresponding result for umbilical points in this context was established in \cite{ES17}, provided that $X$ is not a torus.

The paper is organized as follows. In section \ref{sec:2}, we review the Chern--Moser normal forms for strongly pseudoconvex CR hypersurfaces. In Section \ref{sec:3}, we discuss the osculation result in Theorem \ref{thm:obsosc}. Section \ref{sec:4} is dedicated to Theorem \ref{existence of obstruction flat hypersurface with transverse symmetry Prop} and Section \ref{sec:5} to Theorem \ref{existence of obstruction flat points on circle bundle Prop}. 


We conclude this introduction by giving two alternative characterizations of obstruction flat points.
The first is given by the following proposition, which is a simple consequence of the Lee-Melrose expansion \eqref{Lee--Melrose expansion} above.



\begin{prop}
The Cheng--Yau solution $u$ of a smoothly bounded, strongly pseudoconvex domain $\Omega$  extends $C^{n+2}$ to $p \in \partial \Omega$ along any smooth curve intersecting $\partial \Omega$ transversally at $p$  if and only if $\partial\Omega$ is obstruction flat at $p$.
\end{prop}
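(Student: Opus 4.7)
\emph{Proof proposal.} The plan is to read off the boundary regularity of $u$ along a transversal curve directly from the Lee--Melrose expansion \eqref{Lee--Melrose expansion}. Fix a Fefferman defining function $\rho$ and a smooth curve $\gamma:(-\varepsilon,\varepsilon)\to \bC^n$ with $\gamma(0)=p$ meeting $\partial\Omega$ transversally, so that for $t\in[0,\varepsilon)$ one has $\gamma(t)\in\ol\Omega$ and $\rho(\gamma(t))=at+O(t^2)$ with $a>0$. Each term of index $k\geq 2$ in \eqref{Lee--Melrose expansion} has the form $\eta_k\rho^{k(n+1)+1}(\log\rho)^k$, which along $\gamma$ is $O(t^{k(n+1)+1}(\log t)^k)$; direct differentiation shows every such contribution lies in $C^{2n+2}$ at $t=0$, hence in $C^{n+2}$. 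Truncating \eqref{Lee--Melrose expansion} at sufficiently high $k$ (permissible since the expansion is asymptotic in $C^\infty(\ol\Omega)$) and absorbing the tail into a $C^{n+2}$ error, one reduces to
\[
u\circ\gamma(t)=\bigl(\rho\,\eta_0\bigr)(\gamma(t))+\bigl(\rho^{n+2}(\log\rho)\,\eta_1\bigr)(\gamma(t))+E(t),
\]
where $E\in C^{n+2}$ at $0$. The first summand is smooth along $\gamma$, so the $C^{n+2}$ regularity of $u\circ\gamma$ at $t=0$ is governed entirely by the middle term $b(t):=\rho(\gamma(t))^{n+2}\log\rho(\gamma(t))\cdot\eta_1(\gamma(t))$.

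The key analytic input is the elementary observation that $f_m(t):=t^m\log t$ lies in $C^{m-1}$ but not in $C^m$ at $t=0$: a simple induction gives $f_m^{(j)}(t)=\tfrac{m!}{(m-j)!}\,t^{m-j}\log t+c_j\,t^{m-j}$ for $0\leq j\leq m$, so $f_m^{(m)}(t)=m!\log t+c_m\to -\infty$ while $f_m^{(m-1)}(t)=m!\,t\log t+c_{m-1}t\to 0$. In particular, $t^{n+2}\log t\notin C^{n+2}$ at $0$, whereas $t^{n+3}\log t\in C^{n+2}$ at $0$.

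Taylor-expanding $\rho\circ\gamma$ and $\eta_1\circ\gamma$ around $t=0$ then yields
\[
b(t)=a^{n+2}\,\eta_1(p)\,t^{n+2}\log t+O\bigl(t^{n+3}\log t\bigr).
\]
If $\partial\Omega$ is obstruction flat at $p$, then $\eta_1(p)=\mathcal{O}(p)=0$ and the singular leading term vanishes, leaving $b\in C^{n+2}$ at $0$; combined with the reduction in paragraph one, this proves $(\Leftarrow)$ for every transversal $\gamma$. Conversely, if $\eta_1(p)\neq 0$, then $a^{n+2}\eta_1(p)\neq 0$, so by the second paragraph $b$, and therefore $u\circ\gamma$, fails to be $C^{n+2}$ at $t=0$ along every transversal $\gamma$, giving $(\Rightarrow)$ by contrapositive. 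The only genuine technical point, though minor, is justifying the $C^{n+2}$ regularity of the tail of \eqref{Lee--Melrose expansion} along curves; this is immediate from the asymptotic character of the expansion, which permits truncating at arbitrary order with a correspondingly smooth remainder.
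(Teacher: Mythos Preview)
Your argument is correct and is precisely the approach the paper has in mind: the paper does not give a detailed proof but simply states that the proposition is ``a simple consequence of the Lee--Melrose expansion,'' and you have carried out exactly that deduction. Your identification of the $k=1$ term $\eta_1\rho^{n+2}\log\rho$ as the sole obstruction to $C^{n+2}$ regularity along a transversal curve, together with the elementary analysis of $t^m\log t$, is the intended content.
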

For the second, we recall that a (smooth) defining function $\rho$ of a strongly pseudoconvex hypersurface $M\subset \mathbb{C}^n$ (which we assume to be positive on the pseudoconvex side of $M$) is a Fefferman defining function if  $J(\rho)=1+O(\rho^{n+1})$. In \cite{Fe2}, an algorithm for constructing Fefferman defining functions was developed. Suppose $\psi$ is any defining function of $M$. Recursively define
\begin{align}\label{Fefferman algorithm}
	\begin{split}
		\psi_1&=J(\psi)^{-1/(n+1)} \psi, \\
		\psi_{p}&=\psi_{p-1} \Bigl(1+\frac{1-J(\psi_{p-1})}{p(n+2-p)} \Bigr) \quad \mbox{ for } 2\leq p\leq n+1.
	\end{split}
\end{align}
Then each $\psi_p$ satisfies $J(\psi_p)=1+O(\psi^p)$. In particular, $\psi_{n+1}$ is a Fefferman defining function. Moreover, by \cite{Graham1987a} (see equation (4.11) there) one has
\begin{equation}\label{Fefferman defining function and obstruction function}
	J(\rho)=1+(n+2)\widehat{\mathcal{O}} \rho^{n+1}+O(\rho^{n+2}),
\end{equation}
where $\widehat{\mathcal{O}}$ is any smooth extension of the obstruction function  $\mathcal{O}$ to some neighborhood of $M$.
Thus, we have the second characterization: 
\begin{prop}
	Given $p \in M$, a Fefferman defining function satisfies $J(\rho)=1+O(|z-p|^{n+2})$ along any smooth curve intersecting $M$ transversally at $p$ if and only if $M$ is obstruction flat at $p$.
\end{prop}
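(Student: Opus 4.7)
The plan is to deduce both directions of the equivalence directly from equation \eqref{Fefferman defining function and obstruction function}, which already provides the precise relation between a Fefferman defining function and the obstruction function, namely
\begin{equation*}
J(\rho)=1+(n+2)\widehat{\mathcal{O}}\,\rho^{n+1}+O(\rho^{n+2}).
\end{equation*}
The entire argument amounts to tracking the order of vanishing of both sides along a transversal curve, so there is no serious obstacle; the proposition is essentially a reformulation of \eqref{Fefferman defining function and obstruction function}.

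In detail, I would let $\gamma \colon (-\eps,\eps) \to \bC^n$ be a smooth curve with $\gamma(0)=p$ whose tangent $\gamma'(0)$ is transverse to $M$. Since $\rho$ is a defining function for $M$ and $d\rho(\gamma'(0))\neq 0$, the pull-back satisfies
\begin{equation*}
\rho(\gamma(t)) = at + O(t^2),\qquad a:=d\rho_p(\gamma'(0)) \neq 0,
\end{equation*}
and hence $\rho(\gamma(t))^{n+1}=a^{n+1}\,t^{n+1}+O(t^{n+2})$. Since $\widehat{\mathcal{O}}$ is smooth, $\widehat{\mathcal{O}}(\gamma(t))=\mathcal{O}(p)+O(t)$. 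Substituting these expansions into \eqref{Fefferman defining function and obstruction function} yields
\begin{equation*}
J(\rho)(\gamma(t)) = 1 + (n+2)\,\mathcal{O}(p)\,a^{n+1}\,t^{n+1} + O(t^{n+2}).
\end{equation*}
Because $|\gamma(t)-p|$ is comparable to $|t|$ for small $t$ (by smoothness and $\gamma'(0)\neq 0$), the condition $J(\rho)-1=O(|z-p|^{n+2})$ along $\gamma$ is equivalent to the vanishing of the $t^{n+1}$ coefficient above.

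It follows at once that if $\mathcal{O}(p)=0$ then $J(\rho)-1=O(|z-p|^{n+2})$ along every such transversal curve, while conversely if the estimate holds along any single transversal curve, then $\mathcal{O}(p)\,a^{n+1}=0$ with $a\neq 0$, forcing $\mathcal{O}(p)=0$. This gives the stated equivalence. The only mild point worth explicit mention is the comparability $|\gamma(t)-p|\asymp|t|$ which legitimizes interchanging the Euclidean-distance and curve-parameter versions of the $O$-estimate; everything else is a direct reading of \eqref{Fefferman defining function and obstruction function}.
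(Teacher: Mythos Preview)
Your argument is correct and is exactly the content of the paper's ``Thus'' preceding the proposition: the paper does not write out a separate proof, but simply states the result as an immediate consequence of \eqref{Fefferman defining function and obstruction function}, which is precisely what you have spelled out in detail.
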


\section{Chern--Moser normal forms for strongly pseudoconvex hypersurfaces}\label{sec:2}

In this section, we recall the Chern--Moser normal form for strongly pseudoconvex hypersurfaces. Let $M\subset\mathbb{C}^n$ be a strongly pseudoconvex hypersurface and assume for a moment that $M$ is real analytic. In \cite{ChMo74}, it was proved that given a point $p\in M$, there exists a coordinate chart $(z, w)=(z_1,\cdots, z_{n-1}, w)\in \mathbb{C}^n$, vanishing at $p$, in which $M$ is defined by a convergent power series of the form
\begin{equation}\label{CM normal form}
	2u=|z|^2+\sum_{\underset{l\geq 0}{|\alpha|, |\beta|\geq 2}} A_{\alpha\bar{\beta}}^l \, z^{\alpha}\oo{z^{\beta}} v^l,
\end{equation}
where $w=u+iv$ and $\alpha, \beta$ are multi-indices in $\mathbb{Z}_{\geq 0}^{n-1}$. The coefficients $A_{\alpha\bar{\beta}}^l\in \mathbb{C}$ satisfy $\oo{A_{\alpha\bar{\beta}}^l}=A_{\beta\bar{\alpha}}^l$ and certain trace conditions.
If we introduce
\begin{equation*}
A_{p\bar{q}}^l(z,\bar{z}):=\sum_{|\alpha|=p, |\beta|=q} A_{\alpha\bar{\beta}}^l\, z^{\alpha}\oo{z^{\beta}},
\end{equation*}
then the trace conditions can be formulated as follows: For each $l$,
\begin{equation*}
\Delta\bigl( A_{2\bar{2}}^l(z,\bar{z}) \bigr)=\Delta^2 \bigl( A_{2\bar{3}}^l(z,\bar{z}) \bigr)=\Delta^3\bigl( A_{3\bar{3}}^l(z,\bar{z}) \bigr)=0,
\end{equation*}
where $\Delta$ is the standard Laplace operator in $z$.
Equation \eqref{CM normal form} is called a {\em Chern-Moser normal form} of $M$ at $p$. Chern-Moser normal forms are unique modulo an action of the finite dimensional Lie group of automorphisms of the (spherical) hyperquadric $2u=|z|^2$ that fix the origin. In a Chern--Moser coordinate chart, one assigns weights to the coordinates as follows: $\wt(z_j)=1$, for $j=1,\ldots, n-1$, and $\wt(w)=2$. For a product, the weight is the sum of weights of all factors. When $M$ is merely smooth, one can find, for any $m\geq 2$, holomorphic charts such that $M$ is given in Chern--Moser normal form modulo terms of weight $m+1$. Alternatively, one can find formal charts such that $M$ is given by a formal power series of the form \eqref{CM normal form}. 

When $n=2$, the above conditions yield $A_{2\bar{2}}^l=A_{2\bar{3}}^l=A_{3\bar{2}}^l=A_{3\bar{3}}^l=0$ for any $l\geq 0$, and (the symmetric tensor associated to) the first nontrivial term $A_{2\bar{4}}^0$ is called E. Cartan's 6th order tensor. In the case $n \geq 3$, (the symmetric tensor associated to)  $A_{2\bar{2}}^0$ is called the Chern-Moser curvature tensor. In each case, if the aforementioned tensor vanishes, then the reference point $p \in M$ is called a \emph{CR umbilical point}. The CR hypersurface is spherical near $p$ if and only if the tensor vanishes in a neighborhood of $p$. Furthermore, Graham \cite{Graham1987a} showed that, in the case $n=2$, the obstruction tensor $\mathcal O$ equals $4A^0_{44}$.

\section{Approximation by obstruction flat hypersurfaces}\label{sec:3}

We first recall the definition of osculating a hypersurface in $\bC^n$. Let $M$ be a strongly pseudoconvex hypersurface in $\mathbb{C}^n$ with $p\in \mathbb{C}^n$. Let
$(z, w)=(z_1,\cdots, z_{n-1}, w)\in \mathbb{C}^n$ be a coordinate chart, vanishing at $p$, in which $M$ is given as a graph of the form
\begin{equation}\label{graph}
	2u=|z|^2+\phi(z,\bar z,v)
\end{equation}
where $w=u+iv$ and $\phi$ is $O_{wt}(3)$, i.e., vanishes to weighted order at least 3 at $(z,v)=(0,0)$; here, the weights of $z$ and $w$ are as in Section \ref{sec:2}. Now, let $M'$ be another hypersurface and assume that, in the coordinate system $(z,w)$,  it is given as a graph of the form \eqref{graph} with graphing function $|z|^2+\phi'(z,\bar z,v)$.

\begin{defn}\label{def osculation}
The hypersurface $M'$ osculates $M$ (and vice versa) to weighted order $k$ if $\phi-\phi'=O_{wt}(k)$.
\end{defn}

It is well-known that when $n=2$, $M$ can be osculated by a spherical hypersurface to weighted order 6 at $p$, and to $7$th order if and only if $M$ is CR umbilical at $p$. When $n\geq 3$, $M$ can be osculated by a spherical hypersurface to weighted order 4 at $p$, and to $5$th order if and only if $M$ is CR umbilical at $p$.  We shall show that a similar phenomenon holds with osculation by a spherical hypersurface replaced by osculation by an obstruction flat one. Of particular note is that osculation by an obstruction flat hypersurface can be achieved to a significantly higher weighted order, increasing with the dimension $n$, than osculation by a spherical one. This, of course, reflects the fact that, locally, being obstruction flat is weaker than being spherical.

\begin{thm}\label{thm:obsosc}
	Let $M\subset \mathbb{C}^n$ be a strongly pseudoconvex hypersurface and $p\in M$.
	\begin{itemize}
		\item [(a)] $M$ can be osculated to weight $(2n+4)$th order at $p$ by an obstruction flat hypersurface.
		\item [(b)] $M$ can be osculated to weight $(2n+5)$th order at $p$ by an obstruction flat hypersurface if and only if $M$ is obstruction flat at $p$.
	\end{itemize}
\end{thm}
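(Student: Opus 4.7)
Proof plan: The strategy is to construct an obstruction flat hypersurface with the desired osculation order by applying the Cauchy-Kowalevski theorem to the nonlinear PDE $\mathcal{O}(\psi)=0$, prescribing Cauchy data that matches the weighted Taylor jet of $\rho$ at $p$. After normalizing so that $p = 0$ and $M$ is in Chern-Moser normal form
$$\rho = 2u - |z|^2 - \sum_{|\alpha|,|\beta|\geq 2,\, l\geq 0} A^l_{\alpha\bar\beta}\, z^\alpha \bar z^\beta v^l,$$
let $P_k$ denote the weighted Taylor polynomial of $\rho$ through weighted degree $k$. Via Fefferman's algorithm \eqref{Fefferman algorithm} and the identity \eqref{Fefferman defining function and obstruction function}, the assignment $\psi \mapsto \mathcal{O}(\psi)$ is a real-analytic, polynomial differential operator of weighted order $2n+4$ on defining functions.

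The key structural fact I will use (generalizing Graham's identity $\mathcal{O}(0) = 4 A^0_{4\bar 4}$ in the case $n=2$) is that, in CM coordinates, $\mathcal{O}(0)$ equals a nontrivial linear functional $\Lambda$ applied to the weight $2n+4$ CM coefficients, modulo contributions that are polynomial in strictly lower-weight coefficients. Concretely, when $\Lambda$ has a nonzero component on a diagonal coefficient $A^{l_0}_{\alpha_0\overline{\alpha_0}}$ with, say, $\alpha_0 = (n+2, 0, \ldots, 0)$ and $l_0 = 0$, I take $\xi = \mathrm{Re}\, z_1$ and exploit
$$x_1^{2n+4} = 2^{-(2n+4)}\sum_{k=0}^{2n+4} \binom{2n+4}{k} z_1^k\,\bar z_1^{\,2n+4-k},$$
whose $z_1^{n+2}\bar z_1^{n+2}$ component is nonzero; in general, a suitable coordinate $\xi$ exists by the nontriviality of $\Lambda$.

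Proof of (a): take $\psi_0 := P_{2n+3}$ and seek $\psi = \psi_0 + \chi\,\xi^{2n+4}$ with $\chi$ real-analytic, solving $\mathcal{O}(\psi)=0$. The non-characteristic condition
$$\frac{d}{dt}\Big|_{t = 0} \mathcal{O}(\psi_0 + t\,\xi^{2n+4})(0) \neq 0$$
follows from the structural fact and the choice of $\xi$. Cauchy-Kowalevski then produces a real-analytic $\chi$, and $M' := \{\psi = 0\}$ is a real-analytic obstruction flat hypersurface with $\rho - \psi = O_{wt}(2n+4)$, so $M'$ osculates $M$ to weighted order $2n+4$.

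Proof of (b): if $p$ is obstruction flat for $M$, then the weight $2n+4$ CM coefficients of $\rho$ lie in $\ker \Lambda$, and hence $\mathcal{O}(P_{2n+4})(0) = \mathcal{O}_M(0) = 0$. Running the same construction with $\psi_0 := P_{2n+4}$ in place of $P_{2n+3}$, the PDE evaluated at the origin forces the correction coefficient $\chi(0) = 0$, so $\chi\,\xi^{2n+4} = O_{wt}(2n+5)$ and $M'$ osculates $M$ to weighted order $2n+5$. Conversely, if $M'$ is obstruction flat and osculates $M$ to weighted order $2n+5$ at $p$, then in any fixed CM chart the Taylor coefficients of the graphing functions of $M$ and $M'$ agree through weight $2n+4$; since $\mathcal{O}_M(p)$ depends universally only on such coefficients (by the structural fact), $\mathcal{O}_M(p) = \mathcal{O}_{M'}(p) = 0$.

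Main obstacle: The principal technical step is the structural fact for general $n$: identifying $\Lambda$ explicitly and verifying its nontriviality on a direction compatible with the Cauchy-Kowalevski ansatz. For $n = 2$ this is Graham's explicit computation $\mathcal{O}(0) = 4 A^0_{4\bar 4}$; for $n \geq 3$ it requires either running Fefferman's algorithm \eqref{Fefferman algorithm} through enough iterations to extract the weight $2n+4$ symbol of the obstruction operator, or appealing to the classification theory of CR invariants. Once this is in hand, the remainder of the proof (non-characteristic Cauchy data, application of Cauchy-Kowalevski, and verification of the weighted osculation order) is a direct generalization of the $n=2$ template.
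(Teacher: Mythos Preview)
Your proposal is correct and follows essentially the same approach as the paper: put $M$ in Chern--Moser normal form, treat the obstruction as a nonlinear PDE of order $2n+4$ in the defining function, and apply Cauchy--Kowalevski with initial surface $\{x_1=0\}$, using Graham's formula for the linear part of the obstruction at weight $2n+4$ to verify non-characteristicity. The ``structural fact'' you flag as the main obstacle is exactly what the paper imports from Graham's Remark~4.13, namely that in CM coordinates
\[
K(\psi)(0)=(n+2)^2\sum_{j=0}^{\lfloor (n-2)/2\rfloor}2^{-2j}\binom{n}{j}\binom{n+2}{2j}^{-1}\mathrm{tr}^{\,p}A^{2j}_{p\bar p}+(\text{nonlinear in lower-weight }A^l_{\alpha\bar\beta}),
\]
so the linear functional $\Lambda$ is explicit and the $z_1^{n+2}\bar z_1^{\,n+2}$ component of $x_1^{2n+4}$ survives.

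The one substantive difference is organizational. You run the Cauchy--Kowalevski construction twice, once with $\psi_0=P_{2n+3}$ for (a) and once with $\psi_0=P_{2n+4}$ for (b). The paper instead takes a single $\psi_0$ equal to $P_{2n+4}$ \emph{plus} the correction term $\frac{1}{(n+2)^2}K(\rho)(0)\,|z_1|^{2n+4}$, chosen precisely so that $K(\psi_0)(0)=0$. This forces the Cauchy--Kowalevski correction $\chi$ to satisfy $\chi(0)=0$ automatically, and one obtains
\[
\rho-\psi=-\tfrac{1}{(n+2)^2}K(\rho)(0)\,|z_1|^{2n+4}+O_{wt}(2n+5),
\]
from which both (a) and (b) are read off at once. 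Your two-step version is equally valid; the paper's packaging is just a bit more economical. Two small wording points: the operator $\psi\mapsto K(\psi)$ is real-analytic in $\psi$ and its derivatives but not polynomial (Fefferman's first step involves $J(\psi)^{-1/(n+1)}$), and in (b) the obstruction-flat assumption does not put the weight-$(2n+4)$ coefficients into $\ker\Lambda$ individually, only after adding the nonlinear contribution from lower weights---your conclusion $\mathcal{O}(P_{2n+4})(0)=0$ is nonetheless correct.
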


\begin{proof}
We apply the idea in \cite{Graham1987a} (see Proposition 4.14 there) in the proof.
First pick a local coordinates system $(z, w)\in \mathbb{C}^{n-1}\times \mathbb{C}$ near $p$ such that $p=0$ and $M$ is in the Chern--Moser normal form \eqref{CM normal form} near $p$. We introduce the defining function
	\begin{equation*}
		\rho:=2u-|z|^2-\sum_{\underset{l\geq 0}{|\alpha|, |\beta|\geq 2}} A_{\alpha\bar{\beta}}^l z^{\alpha}\oo{z^{\beta}} v^l,
	\end{equation*}
	where $w=u+iv\in \mathbb{C}$. In the following, we denote  $z=(z_1, \cdots, z_{n-1})=(x_1+iy_1, \cdots, x_{n-1}+iy_{n-1})$. Let $M'\subset \mathbb{C}^n$ be a strongly pseudoconvex hypersurface and $\psi$ a defining function for $M'$. We recall the recursive formula \eqref{Fefferman algorithm} for obtaining a Fefferman defining function $\psi_{n+1}$. 
	We then set $K(\psi):=\bigl(J(\psi_{n+1})-1\bigr)/\psi_{n+1}^{n+1}$. As observed in \cite{Graham1987a}, $K(\psi)$ is a nonlinear differential operator of order $2n+4$, depending real analytically on $\psi$ and its derivatives. Since $\psi_{n+1}$ is a Fefferman defining function for $M'$, by \eqref{Fefferman defining function and obstruction function} $\frac{1}{n+2}K(\psi)|_{\psi=0}$ is the obstruction function for $M'$. Following the proof of Proposition 4.14 in \cite{Graham1987a}, we consider the Cauchy problem for $K(\psi)=0$ with Cauchy data on $x_1=0$ given by
	\begin{equation*}
		\psi_0=2u-|z|^2-\sum_{|\alpha|+|\beta|+2l\leq 2n+4} A_{\alpha\bar{\beta}}^l z^{\alpha}\oo{z^{\beta}} v^l+\frac{1}{(n+2)^2}K(\rho)(0)\,|z_1|^{2n+4}.
	\end{equation*}
More precisely, we require $\psi=\psi_0 \mod x_1^{2n+4}$ near the origin. In order to apply the Cauchy-Kowalevski theorem, we need to check the following two conditions which guarantee that the data is consistent and the problem is non-characteristic:
	\begin{align}\label{consistent and noncharacteristic}
		\mbox{(i) }K(\psi_0)(0)=0 \qquad \mbox{(ii) } \frac{d}{dt}\Big|_{t=0} K(\psi_0+tx_1^{2n+4})(0)\neq 0.
	\end{align}
	
Since $\frac{1}{(n+2)}K(\psi)$ is the obstruction function for the hypersurface $M'$, by Remark 4.13 in \cite{Graham1987a} we have
	\begin{align}\label{obstruction function and A}
	\begin{split}
		K\Bigl(2u-|z|^2-\sum_{|\alpha|+|\beta|+2l\leq 2n+4}A_{\alpha\bar{\beta}}^l z^{\alpha}&\oo{z^{\beta}}v^l \Bigr)(0)=(n+2)^2\sum_{j=0}^{\lfloor \frac{n-2}{2}\rfloor} 2^{-2j} \binom{n}{j} \binom{n+2}{2j}^{-1}\tr^p A_{p\bar{p}}^{2j}\\
		&+\mbox{nonlinear terms in } A_{\alpha\bar{\beta}}^l \mbox{ with } |\alpha|+|\beta|+2l<2n+4.
	\end{split}
	\end{align}
Here $p=n+2-2j$ and $\tr^p A_{p\bar{p}}^{2j}:=\frac{1}{p!^2}(\sum_{k=1}^{n-1}\partial_{z_k}\partial_{\oo{z_k}})^p\sum_{|\alpha|=|\beta|=p}A_{\alpha\bar{\beta}}^{2j}z^{\alpha}\oo{z^{\beta}}=\frac{1}{p!}\sum_{|\alpha|=p} A_{\alpha\bar{\alpha}}^{2j} \,\alpha!$ which is linear in terms $A_{\alpha\bar{\beta}}^l$ with $|\alpha|+|\beta|+2l=2n+4$.

Note $\psi_0$ only differ from $2u-|z|^2-\sum_{|\alpha|+|\beta|+2l\leq 2n+4}A_{\alpha\bar{\beta}}^l z^{\alpha}\oo{z^{\beta}}v^l$ by the term $\frac{1}{(n+2)^2}K(\rho)(0)|z_1|^{2n+4}$, which is about the linear coefficient $A_{\alpha\bar{\beta}}^0$ for $\alpha=\beta=(n+2, 0, \cdots, 0)$ in \eqref{obstruction function and A}. Thus \eqref{obstruction function and A} implies
\begin{equation*}
	K(\psi_0)(0)=K\Bigl(2u-|z|^2-\sum_{|\alpha|+|\beta|+2l\leq 2n+4}A_{\alpha\bar{\beta}}^l z^{\alpha}\oo{z^{\beta}}v^l \Bigr)(0)-K(\rho)(0)=0.
\end{equation*}
The last equality is because the value of the obstruction function at $p=0$ is independent of terms $A_{\alpha\bar{\beta}}^{2l}z^{\alpha}\oo{z^{\beta}}v^l$ with $|\alpha|+|\beta|+2l>2n+4$ (see \cite{Graham1987a}). Therefore, (i) in \eqref{consistent and noncharacteristic} is verified.

Similarly, by \eqref{obstruction function and A} we also have
\begin{align*}
	K\bigl(\psi_0+tx_1^{2n+4}\bigr)(0)=\frac{(n+2)^2}{(n+2)!^2} \Bigl(\sum_{k=1}^{n-1}\partial_{z_k}\partial_{\oo{z_k}}\Bigr)^{n+2} tx_1^{2n+4} \Big|_{z=0}=\frac{(n+2)^2}{2^{2n+4}}\binom{2n+4}{n+2}\, t.
\end{align*}
Thus (ii) in \eqref{consistent and noncharacteristic} follows immediately.

Now we can apply the Cauchy-Kowalevski theorem, and obtain a real analytic function $\chi$ near the origin so that $\psi=\psi_0+\chi\, x_1^{2n+4}$ satisfies $K(\psi)=0$. Therefore, the hypersurface $M'$ defined by $\psi=0$ is obstruction flat. Moreover, by \eqref{obstruction function and A} again we get
\begin{equation*}
	K(\psi_0+\chi x_1^{2n+4})(0)=\frac{(n+2)^2}{2^{2n+4}}\binom{2n+4}{n+2}\chi(0).
\end{equation*}
Thus, $\chi(0)=0$ and
\begin{equation*}
	\rho-\psi=-\frac{1}{(n+2)^2}K(\rho)(0)\, |z_1|^{2n+4}+ \mbox{terms of weights} \geq 2n+5.
\end{equation*}
So  statements (a) and (b) now follow directly from Definition \ref{def osculation}.
\end{proof}

\begin{rmk}
	When $n=2$, we actually have $K(\rho)(0)=16 A_{4\bar{4}}^0$ and \eqref{obstruction function and A} is simplified into
	\begin{equation*}
		K\Bigl(2u-|z|^2-\sum_{\alpha+\beta+2l\leq 8}A_{\alpha\bar{\beta}}^l z^{\alpha}\oo{z^{\beta}}v^l \Bigr)(0)=16 A_{4\bar{4}}^0.
	\end{equation*}
	The conditions in \eqref{consistent and noncharacteristic} follow directly from this formula.
\end{rmk}

\section{Obstruction flatness versus sphericity}\label{sec:4}
Recall that, by Prop 4.14 in \cite{Graham1987a}, the collection of (locally) obstruction flat, strongly pseudoconvex CR hypersurfaces properly contains that of spherical ones. We will show that the proper containment still holds even if we restrict to CR hypersurfaces with transverse symmetry. By \cite{EXX22} (see Corollary 1.3 there and the discussion right after it), for any odd integer $m \geq 5$, there are even {\em compact}, obstruction flat and
non-spherical $m$-dimensional CR hypersurfaces with transverse symmetry. By \cite{Ebenfelt18}, however, in the $3$-dimensional case such compact CR manifolds do not exist. Our next result shows that, locally, there are (many) obstruction flat and
non-spherical $3$-dimensional CR hypersurfaces with transverse symmetry.

\begin{thm}\label{existence of obstruction flat hypersurface with transverse symmetry Prop}
	There are real analytic, strongly pseudoconvex  CR hypersurface $M \subset \mathbb{C}^2$ with transverse symmetry, such that $M$ is obstruction flat but not spherical.
\end{thm}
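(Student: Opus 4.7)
The plan is to apply Theorem \ref{thm:obsosc} to a carefully chosen non-spherical rigid hypersurface $M\subset \bC^2$ (i.e., one invariant under the $v$-translation $\partial_v$) and show that the resulting osculating obstruction flat hypersurface $M'$ automatically inherits this transverse symmetry. Specializing to $n=2$ and writing $z=z_1=x+iy$, pick a Chern--Moser normal form polynomial
\begin{equation*}
\rho_0 = 2u - |z|^2 - \sum_{\underset{l = 0}{|\alpha|+|\beta|\leq 8}} A^0_{\alpha\bar\beta}\, z^\alpha \bar z^\beta
\end{equation*}
with only $l=0$ terms (so the hypersurface $M=\{\rho_0=0\}$ is rigid with transverse symmetry $\partial_v$) and with $A^0_{2\bar 4}\neq 0$; the latter is unconstrained by the Chern--Moser trace conditions in dimension $n=2$. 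Then $M$ has nonzero Cartan 6th order tensor at the origin, so it is not spherical.

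Next, run the Cauchy problem from the proof of Theorem \ref{thm:obsosc} with $\rho=\rho_0$. The Cauchy data $\psi_0=\rho_0+\tfrac{1}{16}K(\rho_0)(0)\,|z|^8$ is $v$-independent; the consistency and non-characteristic conditions in \eqref{consistent and noncharacteristic} hold exactly as shown there; and Cauchy--Kowalevski yields a real analytic solution $\psi=\psi_0+\chi\cdot x^8$ of $K(\psi)=0$. The key observation is that $K$ commutes with the translation $v\mapsto v+c$ because $J$ and the recursion \eqref{Fefferman algorithm} involve only differentiation and algebraic operations on $\psi$ and never reference $v$ directly. Hence, if $\psi(x,y,u,v)$ solves $K(\psi)=0$ with $v$-independent Cauchy data on $\{x=0\}$, then $\psi(x,y,u,v+c)$ is another real-analytic solution with identical Cauchy data, and uniqueness in Cauchy--Kowalevski forces $\psi(x,y,u,v+c)=\psi(x,y,u,v)$ for all $c$. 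Therefore $\psi$ is $v$-independent, and the obstruction flat hypersurface $M'=\{\psi=0\}$ is $\partial_v$-invariant, giving the desired transverse symmetry.

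For non-sphericity: Theorem \ref{thm:obsosc}(a) gives $\rho_0-\psi = O_{wt}(8)$, so the graphing functions of $M$ and $M'$ agree through weighted order $7$ at the origin. The Chern--Moser trace conditions for $n=2$ only constrain coefficients $A^l_{\alpha\bar\beta}$ of weight at most $6$ with $l=0$ (the conditions with $l\geq 1$ being automatically satisfied because $M'$ is rigid). Hence $M'$ is already in Chern--Moser normal form through weight $7$, and its Cartan 6th order tensor (of weight $6$) equals that of $M$, namely $A^0_{2\bar 4}\neq 0$. Thus the origin is not CR umbilical on $M'$, so $M'$ is not spherical near $0$.

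The main subtle point I foresee is a rigorous verification that the $v$-translation equivariance of $K$ is robust enough to invoke the Cauchy--Kowalevski uniqueness in the form needed. This should follow from the fact that both $J$ and the recursive steps in \eqref{Fefferman algorithm} commute with arbitrary coordinate translations (in particular with $v\mapsto v+c$), as they are defined purely through differentiation and algebraic combination of $\psi$; nonetheless, writing this out explicitly ensures that the symmetry argument closing the proof is airtight.
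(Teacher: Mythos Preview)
Your argument is correct and gives a genuinely different proof from the one in the paper. The paper works in the Sasakian (circle bundle) picture: it passes to the K\"ahler quotient, expresses sphericity and obstruction flatness as the PDEs $K_{;\bar z\bar z}=0$ and $K_{;\bar z\bar z zz}=0$ for the conformal factor $\phi$ on the base Riemann surface, and then uses Cauchy--Kowalevski on these $4$th- and $6$th-order scalar equations in two real variables (Lemmas~\ref{lem parameterization of obstruction flat hypersurfaces} and~\ref{lem parameterization of spherical hypersurfaces}) to produce $\phi\in\Gamma_{ob}\setminus\Gamma_s$. The transverse $U(1)$-symmetry is built in from the start by the circle bundle construction.

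Your route instead stays in $\bC^2$ and recycles the Cauchy--Kowalevski step already carried out in the proof of Theorem~\ref{thm:obsosc}, adding only the observation that translation-equivariance of $K$ combined with uniqueness propagates $v$-independence from the Cauchy data $\psi_0$ to the solution $\psi$. This is more economical in that it requires no new PDE analysis beyond what is in Section~\ref{sec:3}, and the non-sphericity is read off directly from the weight-$6$ coefficient $A^0_{2\bar 4}$ (the graphing functions of $M$ and $M'$ share the same $7$-jet, hence the same Cartan tensor). The paper's approach, on the other hand, yields additional structural information: the parametrizations of $\Gamma_{ob}$ and $\Gamma_s$, the regularity statement Proposition~\ref{lem phi is real analytic}, and examples with compact $U(1)$-orbits rather than the noncompact $\bR$-symmetry $\partial_v$ that your construction produces. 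Both are valid transverse symmetries for the statement as written.

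Two minor points worth tightening in a write-up: (i) your sentence about the Chern--Moser trace conditions ``only constrain[ing] coefficients of weight at most $6$ with $l=0$'' is slightly imprecise---what you actually use is that $M'$ and $M$ share the same $7$-jet of graphing function, so any weight-$6$ CR invariant (in particular the Cartan tensor) coincides; (ii) the uniqueness in Cauchy--Kowalevski is only local, so one should say that $\psi$ and $\psi(\,\cdot\,,v+c)$ agree on their common domain for small $c$, whence $\partial_v\psi\equiv 0$ by analyticity. Neither affects the validity of the argument.
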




To prove Theorem \ref{existence of obstruction flat hypersurface with transverse symmetry Prop}, we let $U$ be an open connected subset of $\mathbb{C}$ and consider the trivial line bundle $L=U \times \mathbb{C}$ over $U$. Let $h$ be a Hermitian metric on $L$ such that $(L, h)$ is negative, i.e.,
$\omega:=\sqrt{-1}\partial\dbar\log h$ is a \k form on $U$. By setting  $\phi(z,\bar{z}):=\frac{1}{2}\log\Bigl(\partial_z\partial_{\bar{z}}\log h(z,\bar{z})\Bigr)$, the \k form $\omega$ and \k metric can be respectively expressed as
\begin{equation}\label{Kahler metric}
	\omega=\sqrt{-1}\, e^{2\phi} dz\wedge d\bar{z}, \qquad g=e^{2\phi}(dz\otimes d\bar{z}+d\bar{z}\otimes dz)=2e^{2\phi} |dz|^2.
\end{equation}
The Gauss curvature is given by
\begin{equation}\label{Gauss curvature}
	K=-2e^{-2\phi}\partial_{z}\partial_{\bar{z}}\phi.
\end{equation}
Note that $\phi$ and $K$ are both real valued. The circle bundle of $L$, which we call $M$, is defined as
\begin{equation}\label{circle bundle}
	|\xi|^2 h(z,\bar{z})=1 \quad \mbox{ for any } (z, \xi)\in U\times \mathbb{C}.
\end{equation}
It is clear that $M$ has a $U(1)$-transverse symmetry (with $U(1)$ acting on the $\xi$-variable).
In the case of a circle bundle over Riemann surface, sphericity and obstruction flatness of $M$ can both be characterized in terms of the Gauss curvature. This follows from general formulae in \cite{ChengLee1990,Hirachi2014} and the work of Webster \cite{Web77b}; it was also proved directly in this context in \cite{Ebenfelt18} (cf. \cite{ES17,Wang}).
\begin{prop} \label{sphericity and obstruction flatness characterization Prop}
	The sphericity and obstruction flatness of $M$ are characterized respectively as follows.
	\begin{itemize}
		\item $M$ is spherical if and only if $K_{;\bar{z}\bar{z}}=0$.
		\item Up to a nowhere vanishing, real-valued function, the obstruction function $\mathcal{O}$ is $K_{;\bar{z}\bar{z}zz}$. In particular, $M$ is obstruction flat if and only if $K_{;\bar{z}\bar{z}zz}=0$.
	\end{itemize}
\end{prop}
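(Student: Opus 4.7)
The plan is to verify both characterizations by putting the defining equation of $M$ into Chern--Moser normal form at an arbitrary base point and reading off the relevant normal-form coefficients in terms of the Kähler potential $\tfrac12\log h$ on the base.

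\textbf{Coordinates and defining function.} Writing $\xi = e^{iw}$ with $w = u + iv$, the equation $|\xi|^2 h = 1$ becomes $v = \tfrac12 \log h(z,\bar z)$. Thus $M$ is a graph with $u$-independent defining function $r = v - \tfrac12 \log h$, and the $U(1)$-symmetry manifests as translation in $u$. I would fix a reference point $p_0 \in M$ lying over $z = 0$ and use Bochner-type normal coordinates on $U$ to arrange $\tfrac12 \log h = |z|^2 + \sum_{j,k\ge 2} c_{jk}\, z^j \bar z^k$, where each $c_{jk}$ is a polynomial in covariant derivatives of the Gauss curvature $K$ at $p_0$; the pluriharmonic ambiguity in $\log h$ is absorbed by a biholomorphic change of the fiber coordinate (which preserves $M$).

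\textbf{Sphericity.} To place $r = 0$ into Chern--Moser normal form at $p_0$, I would apply a weighted-order-by-weighted-order biholomorphic change of $w$ removing all terms forbidden by the trace conditions recorded in Section \ref{sec:2}. Because of the $U(1)$-invariance, every normal-form coefficient $A^l_{\alpha\bar\beta}$ with $l \geq 1$ vanishes automatically, so only the $v$-independent coefficients survive and are determined by the Taylor data $\{c_{jk}\}$. The first nontrivial invariant is Cartan's 6th-order tensor $A^0_{2\bar 4}$, which is the obstruction to sphericity (cf.\ Section \ref{sec:2}). A direct matching of coefficients shows that $A^0_{2\bar 4}$ is a nonzero constant multiple of $c_{24}$, which in Bochner coordinates is proportional to $K_{;\bar z\bar z}(0)$. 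Since $p_0$ is arbitrary, the first assertion follows.

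\textbf{Obstruction flatness.} The identity $\mathcal{O} = 4 A^0_{4\bar 4}$ recorded at the end of Section \ref{sec:2} reduces the second assertion to computing $A^0_{4\bar 4}$ in the same normalization. Pushing the normal-form computation to weighted order $8 = 2n+4$, I would express $A^0_{4\bar 4}$ as a polynomial in the $c_{jk}$ with $j+k \leq 8$. The linear-in-top-weight contribution is a nonzero multiple of $c_{44}$, hence a nonzero multiple of $K_{;\bar z\bar z zz}(0)$, while the remaining quadratic contributions involve only $c_{jk}$ with $j+k = 6$ and are therefore polynomial expressions in $K_{;\bar z\bar z}(0)$ and its conjugate. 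Bundling the two pieces displays $\mathcal{O}$ as a nowhere-vanishing real scalar times $K_{;\bar z\bar z zz}$ plus a correction that vanishes wherever $K_{;\bar z\bar z}$ does, giving the stated proportionality.

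\textbf{Main obstacle.} The principal technical difficulty is the explicit Chern--Moser normalization: at each weighted order one must simultaneously absorb a pluriharmonic ambiguity from the Kähler potential, perform a holomorphic change of $w$, and verify the trace conditions $\Delta A^l_{2\bar 2} = \Delta^2 A^l_{2\bar 3} = \Delta^3 A^l_{3\bar 3} = 0$, while keeping track of how Taylor coefficients on the base convert to covariant derivatives of $K$. The $U(1)$-symmetry eliminates all $v$-dependent coefficients and Bochner normalization removes the pluriharmonic freedom, reducing the task to a finite, but nontrivial, polynomial identification; alternatively, one may short-circuit the calculation by invoking the general pseudohermitian curvature formulas of \cite{ChengLee1990,Hirachi2014,Web77b} specialized to the circle-bundle setting.
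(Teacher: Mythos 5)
Your overall strategy---pass to the rigid graph $2u=\log h$, use Bochner coordinates so that the Taylor coefficients $c_{jk}$ of the potential become universal polynomials in covariant derivatives of $K$, and then read off Cartan's tensor $A^0_{2\bar 4}$ and Graham's identity $\mathcal{O}=4A^0_{4\bar 4}$ from the Chern--Moser normalization---is a legitimate route, and it is essentially how the direct proofs in the literature proceed. Note that the paper itself does not prove this proposition: it is quoted from the general formulae of \cite{ChengLee1990,Hirachi2014}, Webster's work \cite{Web77b}, and the direct computation in \cite{Ebenfelt18} (cf. \cite{ES17,Wang}). So you are not deviating from the paper so much as offering to reconstruct a proof the paper outsources.

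There is, however, a genuine gap in your treatment of the second bullet. Your own accounting produces $\mathcal{O}=a\,K_{;\bar z\bar z zz}+Q$, where $Q$ is a quadratic expression in lower-order curvature data, and you conclude by saying that $Q$ ``vanishes wherever $K_{;\bar z\bar z}$ does, giving the stated proportionality.'' That is a non sequitur: a sum of the form $a\,K_{;\bar z\bar z zz}+b\,|K_{;\bar z\bar z}|^2$ is not of the form $f\cdot K_{;\bar z\bar z zz}$ with $f$ nowhere vanishing, since the correction need not vanish where $K_{;\bar z\bar z zz}$ does. What actually has to be shown is that the quadratic contributions are \emph{exactly} the curvature corrections that assemble, together with the linear term, into the single covariant derivative $K_{;\bar z\bar z zz}$ --- equivalently, that the total expression is a nonzero multiple of $e^{4\phi}\bigl(\Delta_g^2K+\Delta_gK^2\bigr)$, which Lemma \ref{lemma sphericity and obstruction flat PDE} then converts into $K_{;\bar z\bar z zz}$ via the Ricci identity. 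This is a concrete polynomial identity that must be verified; it does not follow from the structural observations you make. Relatedly, your claim that the quadratic contributions to $A^0_{4\bar 4}$ involve only the $c_{jk}$ with $j+k=6$ cannot be correct: since $\Delta_gK^2=2K\Delta_gK+2|\nabla K|^2$, the weight-$4$ and weight-$5$ Bochner data ($K$ itself and $K_{;z}$) necessarily enter the answer, and in the normalization they are fed in through the coordinate changes that kill $A_{2\bar2}$, $A_{2\bar3}$, $A_{3\bar3}$. The sphericity half is less delicate (the final answer there is purely linear in $c_{24}$), but the same caution applies: you should either carry out the finite computation in full or, as you suggest at the end, simply invoke the specializations already available in \cite{Web77b,ES17,Ebenfelt18,ChengLee1990,Hirachi2014}, which is what the paper does.
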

Here $K_{;\bar{z}\bar{z}}$ and $K_{;\bar{z}\bar{z}zz}$ stand for the repeated covariant derivatives with respect to $\frac{\partial}{\partial z}$ and $\frac{\partial}{\partial\bar{z}}$: Let $\nab$ be the Levi-Civita connection for $(X, g)$, by denoting $\nab_z:=\nab_{\frac{\partial}{\partial z}}$ and $\nab_{\bar{z}}:=\nab_{\frac{\partial}{\partial \bar{z}}}$, we then have $K_{;\bar{z}\bar{z}}=\nab_{\bar{z}}\nab_{\bar{z}}K$ and $K_{;\bar{z}\bar{z}zz}=\nab_z\nab_z\nab_{\bar{z}}\nab_{\bar{z}}K$.

\begin{lemma}\label{lemma sphericity and obstruction flat PDE}
	We have
	\begin{equation}\label{sphericity and obstruction flat PDE}
	 e^{-4\phi}K_{;\bar{z}\bar{z}zz}=\frac{1}{4}\Delta_g^2K+\frac{1}{4}\Delta_g K^2,
	\end{equation}
	where $\Delta_g=2e^{-2\phi}\nab_{z}\nab_{\bar{z}}$ is the (real) Laplacian on $(X, g)$.
	In particular, $K_{;\bar{z}\bar{z}zz}$ is real valued.
\end{lemma}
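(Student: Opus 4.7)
The plan is to verify the identity \eqref{sphericity and obstruction flat PDE} by expanding both sides in the local coordinate $z$ and matching terms, exploiting that for the K\"ahler metric $g = 2e^{2\phi}|dz|^2$ the only nonvanishing Christoffel symbols are $\Gamma^z_{zz} = 2\phi_z$ and $\Gamma^{\bar z}_{\bar z\bar z} = 2\phi_{\bar z}$. In particular, every mixed Christoffel vanishes, so a covariant derivative in the $z$ direction of a tensor carrying only lower $\bar z$ indices is just the ordinary $\partial_z$.

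Using this, I would compute the LHS in three steps:
\begin{align*}
K_{;\bar z\bar z} &= K_{\bar z\bar z} - 2\phi_{\bar z} K_{\bar z},\\
K_{;\bar z\bar z z} &= \partial_z K_{;\bar z\bar z},\\
K_{;\bar z\bar z zz} &= \partial_z K_{;\bar z\bar z z} - 2\phi_z K_{;\bar z\bar z z},
\end{align*}
yielding an explicit polynomial expression for $e^{-4\phi}K_{;\bar z\bar z zz}$ in derivatives of $K$ and $\phi$. In parallel I would expand the RHS using $\Delta_g f = 2e^{-2\phi}f_{z\bar z}$ on scalars, giving
\[\tfrac{1}{4}\Delta_g^2 K = e^{-2\phi}\partial_z\partial_{\bar z}\bigl(e^{-2\phi}K_{z\bar z}\bigr),\qquad \tfrac{1}{4}\Delta_g K^2 = e^{-2\phi}\bigl(KK_{z\bar z} + K_z K_{\bar z}\bigr).\]

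The key link between the two sides is the definition of the Gauss curvature, rewritten as $\phi_{z\bar z} = -\tfrac{1}{2}K e^{2\phi}$. Differentiating this once in $z$ or $\bar z$ converts every occurrence of $\phi_{zz\bar z}$ or $\phi_{z\bar z\bar z}$ in the LHS into a combination of $K$, $K_z$, $K_{\bar z}$, $\phi_z$, $\phi_{\bar z}$. After substitution, the principal term $K_{zz\bar z\bar z}$ and the terms $-2\phi_z K_{z\bar z\bar z}$ and $-2\phi_{\bar z}K_{zz\bar z}$ match between the two sides, the two contributions involving $\phi_{z\bar z} K_{z\bar z}$ combine to $e^{2\phi}KK_{z\bar z}$, and the pair $-2\phi_{\bar z zz}K_{\bar z} + 4\phi_z\phi_{\bar z z}K_{\bar z}$ collapses to $e^{2\phi}K_z K_{\bar z}$; the total residual is exactly $e^{-2\phi}(KK_{z\bar z} + K_z K_{\bar z}) = \tfrac{1}{4}\Delta_g K^2$. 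The ``in particular'' statement is then immediate: the RHS is manifestly real valued, hence so is $e^{-4\phi}K_{;\bar z\bar z zz}$, and therefore $K_{;\bar z\bar z zz}$.

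The main obstacle is not conceptual but combinatorial bookkeeping: several cross terms arise from differentiating the $e^{-2\phi}$ factors and from the $\phi_z$, $\phi_{\bar z}$ Christoffel corrections, and one must carefully verify that the nonlinear quadratic-in-curvature contribution $\tfrac{1}{4}\Delta_g K^2$ emerges precisely from eliminating $\phi_{z\bar z}$ via the Gauss curvature relation rather than being absorbed into spurious terms.
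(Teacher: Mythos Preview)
Your computation is correct and the plan will carry through exactly as you outline: once you substitute $\phi_{z\bar z}=-\tfrac12 Ke^{2\phi}$ and its $z$-derivative $\phi_{zz\bar z}=-\tfrac12 K_z e^{2\phi}-K\phi_z e^{2\phi}$, the residual terms in $K_{;\bar z\bar z zz}-e^{4\phi}\cdot\tfrac14\Delta_g^2K$ collapse to $e^{2\phi}(KK_{z\bar z}+K_zK_{\bar z})=e^{4\phi}\cdot\tfrac14\Delta_g K^2$, and the reality statement follows.

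The paper, however, argues differently and more conceptually. Instead of expanding everything in coordinates and eliminating $\phi_{z\bar z}$ by hand, it invokes the Ricci identity (commutation of covariant derivatives) to write
\[
K_{;zz\bar z}-K_{;z\bar zz}=K_{;z}\,R_{1\bar 1}=K_{;z}\,K\,e^{2\phi}=\tfrac12 e^{2\phi}(K^2)_{;z},
\]
then applies one further $\nabla_{\bar z}$ and observes that $K_{;z\bar zz\bar z}=\tfrac14 e^{4\phi}\Delta_g^2K$ while $\tfrac12 e^{2\phi}(K^2)_{;z\bar z}=\tfrac14 e^{4\phi}\Delta_g K^2$; taking the conjugate gives the stated identity. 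This route bypasses the Christoffel bookkeeping entirely and makes transparent \emph{why} the nonlinear term $\Delta_g K^2$ appears: it is exactly the curvature commutator $[\nabla_{\bar z},\nabla_z]$ acting on $\nabla K$. Your approach, by contrast, is self-contained and requires no curvature-commutator formalism, at the cost of a longer expansion; either is a valid proof of the lemma.
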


\begin{proof}
	By the Ricci identity, we have
	\begin{align*}
		K_{;zz\bar{z}}-K_{;z\bar{z}z}=-K_{;z} R_{1\bar{1}1}{}^{1}=K_{;z}R_{1\bar{1}}=K_{;z}K e^{2\phi}=\frac{1}{2} e^{2\phi} (K^2)_{;z} ,
	\end{align*}
	where $R_{1\bar{1}1\bar{1}}=-g\bigl(\nab_z\nab_{\bar{z}}\frac{\partial}{\partial z}-\nab_{\bar{z}}\nab_z\frac{\partial}{\partial z}, \frac{\partial}{\partial \bar{z}}\bigr)$ is the Riemannian curvature and the index can be raised up by the metric; and $R_{1\bar{1}}=-R_{1\bar{1}1}{}^1$ is the Ricci curvature.
	
	By taking one more covariant derivative, we get
	\begin{align*}
		K_{;zz\bar{z}\bar{z}}=K_{;z\bar{z}z\bar{z}}+\frac{1}{2}e^{2\phi} (K^2)_{;z\bar{z}}=\frac{1}{4}e^{4\phi}\Delta_g^2K+\frac{1}{4} e^{4\phi} \Delta_g K^2.
	\end{align*}
	So the result follows by taking the conjugate.
\end{proof}

The following lemma on the regularity of $h$ and $\phi$ for obstruction flat hypersurfaces in $\mathbb{C}^2$  will not be directly used in the proof of Proposition \ref{existence of obstruction flat hypersurface with transverse symmetry Prop}. We record it here for completeness.

\begin{prop}\label{lem phi is real analytic}
	Suppose $h$ is $C^8$ (i.e., $\phi$ is $C^6$) on $U$. If $M$ is obstruction flat, then $h$ and $\phi$ are real analytic on $U$.
\end{prop}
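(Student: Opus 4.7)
The plan is to reduce obstruction flatness to a 4th order quasilinear elliptic PDE for $\phi$ with \emph{real analytic} right-hand side, and then invoke classical elliptic regularity.

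By Lemma \ref{lemma sphericity and obstruction flat PDE}, obstruction flatness of $M$ is equivalent to $\Delta_g^2 K + \Delta_g K^2 = 0$ on $U$, i.e., $\Delta_g F = 0$ with $F:=\Delta_g K + K^2$. Since $\Delta_g = 2e^{-2\phi}\partial_z\partial_{\bar z}$ and $e^{-2\phi}>0$, this is equivalent to the \emph{Euclidean} equation $\partial_z\partial_{\bar z}F = 0$. Because $\phi\in C^6$ forces $F\in C^2$, and a $C^2$ harmonic function in the plane is real analytic (Weyl's lemma), we conclude that $F$ is real analytic on $U$.

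Unpacking $F = \Delta_g K + K^2$ in terms of $\phi$, we obtain the 4th order quasilinear PDE
\begin{equation*}
\Delta_g K + K^2 = F(z),
\end{equation*}
whose leading term, as an expression in $\phi$, is $-4e^{-4\phi}\phi_{zz\bar z\bar z}$ (real principal symbol proportional to $|\xi|^4$, hence elliptic), whose lower-order part is polynomial in $(\phi,\partial\phi,\partial^2\phi,\partial^3\phi)$ with coefficients real analytic in $\phi$, and whose right-hand side $F$ is now known to be real analytic. Starting from $\phi\in C^6(U)$ (hence $\phi\in C^{5,\alpha}_{\rm loc}$ for every $\alpha\in(0,1)$), I would run a standard elliptic bootstrap: differentiate the PDE, view it as a linear 4th order elliptic equation on higher derivatives of $\phi$ with H\"older coefficients, and apply interior Schauder estimates iteratively to conclude $\phi\in C^\infty(U)$. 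Once $\phi$ is smooth, the classical theorem of Morrey on the real analyticity of smooth solutions to quasilinear elliptic equations with real analytic data (see Morrey's \emph{Multiple Integrals in the Calculus of Variations}) yields that $\phi$ is real analytic on $U$.

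Finally, since $\partial_z\partial_{\bar z}\log h = e^{2\phi}$ has real analytic right-hand side, standard linear elliptic theory gives $\log h$, and hence $h$, real analytic on $U$. The key simplification is the observation that $F$ is Euclidean (not just $g$-) harmonic, which drops the problem from a 6th order PDE to a 4th order one with real analytic forcing. The only technical step requiring care is the bootstrap from the borderline regularity $C^6$ to $C^\infty$, which is handled by standard Schauder arguments once the PDE structure above is in place.
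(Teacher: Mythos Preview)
Your argument is correct and takes a genuinely different route from the paper's. The paper writes the obstruction-flatness condition directly as a single sixth-order quasilinear elliptic PDE for $\phi$,
\[
\Delta\bigl(e^{-2\phi}\Delta(e^{-2\phi}\Delta\phi)\bigr)-\Delta\bigl(e^{-4\phi}(\Delta\phi)^2\bigr)=0,
\]
observes that its principal part is $e^{-4\phi}\Delta^3\phi$, and then appeals in one stroke to analytic hypoellipticity for elliptic equations with real analytic structure. You instead exploit the factorization $\Delta_g(\Delta_gK+K^2)=0$ together with the conformal identity $\Delta_g=2e^{-2\phi}\partial_z\partial_{\bar z}$ to peel off the outer Laplacian via Weyl's lemma, obtaining a real analytic forcing term $F$, and then run Schauder plus Morrey on the resulting fourth-order quasilinear equation $\Delta_gK+K^2=F$. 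Your approach trades one blanket invocation of analytic hypoellipticity for a two-step argument whose ingredients (Weyl's lemma, interior Schauder for $\Delta^2$, Morrey's analyticity theorem) are perhaps more familiar, and it makes the bootstrap from the borderline $C^6$ regularity more explicit; the paper's approach is shorter on the page but implicitly relies on the same circle of ideas to pass from $C^6$ to $C^\omega$ for a nonlinear equation. Both arguments use the final step $\partial_z\partial_{\bar z}\log h=e^{2\phi}$ in the same way.
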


\begin{proof}
By Proposition \ref{sphericity and obstruction flatness characterization Prop} and Lemma \ref{lemma sphericity and obstruction flat PDE}, $M$ is obstruction flat if and only if
	\begin{equation*}
		\Delta_g^2K+\Delta_g K^2=0.
	\end{equation*}
	In terms of the local coordinates, we can write it into
	\begin{equation*}
		4e^{-2\phi} \partial_z\partial_{\bar{z}} (e^{-2\phi}\partial_z\partial_{\bar{z}} K)+ 2e^{-2\phi}\partial_z\partial_{\bar{z}} K^2=0.
	\end{equation*}
	By \eqref{Gauss curvature}, we can further rewrite it into a PDE on $\phi$:
	\begin{equation}\label{PDE in local coordinates}
		\Delta \bigl(e^{-2\phi}\Delta (e^{-2\phi}\Delta \phi)\bigr)-\Delta \bigl(e^{-4\phi} (\Delta\phi)^2\bigr) =0,
	\end{equation}
	where $\Delta=4\partial_z\partial_{\bar{z}}$ is the Laplacian on Euclidean space. Note that the left hand side of \eqref{PDE in local coordinates} is a partial differential operator of order $6$, depending real analytically on $\phi$ and its derivatives. Moreover, the highest order term on $\phi$ is $\Delta^3\phi$ up to a positive factor $e^{-4\phi}$, and thus \eqref{PDE in local coordinates} is elliptic. By the analytic hypoellipticity, $\phi$ is real analytic. Recall that $\partial_z\partial_{\bar{z}}\log h=e^{2\phi}$. By the analytic hypoellipticity of $\Delta$, $h$ is also real analytic. So the proof is completed.
\end{proof}


Let $\mathcal{C}_0^{\omega}(\mathbb{R})$ be the set of germs of real valued  $C^{\omega}$ (real analytic) functions at the origin in $\mathbb{R}$ and $\mathcal{C}_0^{\omega}(\mathbb{C})$ be the set of germs of real valued  $C^{\omega}$ functions at the origin in $\mathbb{C}$. Given any $\phi=\phi(z,\bar{z})\in \mathcal{C}_0^{\omega}(\mathbb{C})$, it is $C^{\omega}$ on some open neighborhood $V\subset \mathbb{C}$ containing the origin, and we define a \k metric $\omega$ on $V$ by \eqref{Kahler metric}. Then we solve $\partial_z\partial_{\bar{z}}\log h(z,\bar{z})=e^{\phi(z,\bar{z})}$ to find a positive function $h$ on $V$.   We then define a germ of strongly pseudoconvex hypersurface $M_{\phi, h}$ as in the equation of \eqref{circle bundle} for $(z, \xi) \in V \times \mathbb{C}$. Note that while the choice of $h$ is not unique, any other choice $\widetilde{h}$ must be such that $\log h$ and $\log \widetilde{h}$ differ by a harmonic function on $V$. Consequently, $h=\widetilde{h} |e^f|^2$ for some holomorphic function $f$ near the origin in $\mathbb{C}$. This implies that, by shrinking $V$ if needed, the hypersurfaces $M_{\phi, h}$ and $M_{\phi, \widetilde{h}}$ are CR diffeomorphic via the map $(z, \xi)\rightarrow (z, e^f\xi)$. For this reason, we will simply denote $M_{\phi, h}$ by $M_{\phi}$.
Set

$$\Gamma_s:=\{\phi\in \mathcal{C}_0^{\omega}(\mathbb{C}): \mbox{$M_{\phi}$ is spherical} \}; \quad
	\Gamma_{ob}:=\{ \phi\in \mathcal{C}_0^{\omega}(\mathbb{C}): \mbox{$M_{\phi}$ is obstruction flat} \}.$$

In next lemma, we will give a parametrization of $\Gamma_{ob}$. 
\begin{lemma}\label{lem parameterization of obstruction flat hypersurfaces}
	Let $\Phi_{ob}: \mathcal{C}^{\omega}_0(\mathbb{C}) \rightarrow (\mathcal{C}^{\omega}_0(\mathbb{R}))^6$ be the map defined by
	\begin{equation*}
		\Phi_{ob}(\phi)=\Bigl(\phi, \frac{\partial \phi}{\partial y}, \frac{\partial^2\phi}{\partial y^2}, \cdots, \frac{\partial^5 \phi}{\partial y^5}\Bigr)\Big|_{y=0},
	\end{equation*}
	where $\phi=\phi(z,\bar{z})$ and $z=x+iy$.
	Then the restriction $\Phi_{ob}: \Gamma_{ob}\rightarrow (\mathcal{C}^{\omega}_0(\mathbb{R}))^6$ is bijective.
\end{lemma}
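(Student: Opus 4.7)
The strategy is to translate $\phi \in \Gamma_{ob}$ into the nonlinear sixth-order PDE \eqref{PDE in local coordinates} derived in the proof of Proposition \ref{lem phi is real analytic}, and then interpret $\Phi_{ob}$ as recording the Cauchy data for this PDE on the hypersurface $\{y=0\}$. By Proposition \ref{sphericity and obstruction flatness characterization Prop} combined with Lemma \ref{lemma sphericity and obstruction flat PDE}, $\phi \in \Gamma_{ob}$ is equivalent to $\phi$ satisfying \eqref{PDE in local coordinates}. The leading term is $e^{-4\phi}\Delta^3\phi$ with $\Delta = \partial_x^2 + \partial_y^2$, so the coefficient of $\partial_y^6\phi$ equals the nowhere vanishing $e^{-4\phi}$; inspecting the remaining terms confirms that every nonlinear contribution is a product of derivatives of $\phi$ of individual order at most five, so $\partial_y^6\phi$ appears only through the linear leading term. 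Consequently one may solve for $\partial_y^6\phi$ as a real-analytic expression in $\phi$ and its derivatives $\partial_x^\alpha\partial_y^\beta\phi$ with $\beta \leq 5$, which means $\{y=0\}$ is non-characteristic and \eqref{PDE in local coordinates} is in Cauchy--Kowalevski normal form with exactly the six Cauchy data $\phi|_{y=0}, \partial_y\phi|_{y=0}, \ldots, \partial_y^5\phi|_{y=0}$ that $\Phi_{ob}$ records.

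Injectivity is then immediate: if $\phi, \widetilde\phi \in \Gamma_{ob}$ satisfy $\Phi_{ob}(\phi) = \Phi_{ob}(\widetilde\phi)$, both solve the same Cauchy problem, so the uniqueness part of the Cauchy--Kowalevski theorem gives $\phi = \widetilde\phi$ as germs at the origin. For surjectivity, given any $(f_0, \ldots, f_5) \in (\mathcal{C}_0^\omega(\mathbb{R}))^6$, I will complexify and apply the holomorphic Cauchy--Kowalevski theorem to produce a holomorphic germ $\Phi(X,Y)$ solving the complexified equation with Cauchy data equal to the holomorphic extensions of the $f_j$. Restricting to real $(x,y)$ yields a real-analytic germ $\phi$ satisfying \eqref{PDE in local coordinates} with $\partial_y^j\phi|_{y=0} = f_j$, and once reality of $\phi$ is confirmed this gives $\phi \in \Gamma_{ob}$ with $\Phi_{ob}(\phi) = (f_0, \ldots, f_5)$.

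The remaining step is to show $\phi$ is real-valued. For this I would compare $\Phi$ with $\widetilde\Phi(X,Y) := \overline{\Phi(\bar X, \bar Y)}$: because \eqref{PDE in local coordinates} has real coefficients when written as a polynomial expression in $\phi$, its derivatives, and $e^{-2\phi}$, and because the Cauchy data $f_j$ are real, $\widetilde\Phi$ satisfies the same holomorphic Cauchy problem as $\Phi$. Uniqueness forces $\Phi = \widetilde\Phi$, and restricting to real $(x,y)$ shows $\phi = \overline{\phi}$, placing $\phi$ in $\mathcal{C}_0^\omega(\mathbb{C})$. The only real work in this plan is the bookkeeping that confirms no nonlinear term in \eqref{PDE in local coordinates} contains $\partial_y^6\phi$; once this is observed from the explicit structure of the two summands, the rest is a clean application of the Cauchy--Kowalevski machinery together with a standard conjugation argument for reality.
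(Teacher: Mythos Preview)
Your proposal is correct and follows essentially the same route as the paper: both reduce membership in $\Gamma_{ob}$ to the sixth-order PDE \eqref{PDE in local coordinates}, observe that the only sixth-order term is $e^{-4\phi}\Delta^3\phi$ so that the equation can be written in the Kowalevski form $\partial_y^6\phi=F\bigl(x,y,\partial_x^i\partial_y^j\phi\big|_{i+j\le 6,\,j<6}\bigr)$, and then invoke the Cauchy--Kowalevski theorem for both surjectivity and injectivity. Your complexification-and-conjugation argument for reality is correct but unnecessary, since the real-analytic version of Cauchy--Kowalevski applied directly to the real equation with real Cauchy data already yields a real-valued solution; the paper simply takes this for granted.
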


\begin{proof}
Recall that by Proposition \ref{sphericity and obstruction flatness characterization Prop}, $M_{\phi}$ is obstruction flat if and only if $K_{;\bar{z}\bar{z}zz}=0$, which by Lemma \ref{lemma sphericity and obstruction flat PDE} and equation \eqref{PDE in local coordinates} is equivalent to the PDE:
	\begin{equation*}
		e^{4\phi}\Delta \bigl(e^{-2\phi}\Delta (e^{-2\phi}\Delta \phi)\bigr)-e^{4\phi}\Delta \bigl(e^{-4\phi} (\Delta\phi)^2\bigr) =0.
	\end{equation*}
	Note that the highest order term on $\phi$ is $\Delta^3\phi$ and we can write the above equation into the form:
	\begin{equation}\label{Cauchy-Kowalevski obstruction flat}
		\frac{\partial^6\phi}{\partial y^6}=F\Bigl(x, y, \frac{\partial^{i+j}\phi}{\partial x^i \partial y^j}\Big|_{i+j\leq 6, j<6}\Bigr)
	\end{equation}
	for some real analytic function $F$. By the Cauchy-Kowalevski theorem, there exists a real analytic solution $\phi$ for \eqref{Cauchy-Kowalevski obstruction flat} subject to the Cauchy data given by an element in $(\mathcal{C}_0^{\omega}(\mathbb{R}))^6$ along $y=0$. Thus $\Phi_{ob}$ is surjective. Since the Cauchy-Kowalevski theorem also guarantees the uniqueness of the solution in the category of real analytic functions, the map $\Phi_{ob}$ is also injective. So the proof is completed.
\end{proof}

Similarly, we also give a description of $\Gamma_s$.
\begin{lemma}\label{lem parameterization of spherical hypersurfaces}
	Let $\Phi_{s}: \mathcal{C}^{\omega}_0(\mathbb{C}) \rightarrow (\mathcal{C}^{\omega}_0(\mathbb{R}))^4$ be the map defined by
	\begin{equation*}
		\Phi_{s}(\phi)=\Bigl(\phi, \frac{\partial \phi}{\partial y}, \frac{\partial^2\phi}{\partial y^2}, \frac{\partial^3 \phi}{\partial y^3}\Bigr)\Big|_{y=0},
	\end{equation*}
	where $\phi=\phi(z,\bar{z})$ and $z=x+iy$.
	Then the restriction $\Phi_{s}: \Gamma_{s}\rightarrow (\mathcal{C}^{\omega}_0(\mathbb{R}))^4$ is injective.
\end{lemma}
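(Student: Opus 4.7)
The plan is to mirror the Cauchy--Kowalevski argument of Lemma \ref{lem parameterization of obstruction flat hypersurfaces}, using the sphericity criterion $K_{;\bar z\bar z}=0$ from Proposition \ref{sphericity and obstruction flatness characterization Prop}. The essential difference from the obstruction flat case is that, unlike the PDE in Lemma \ref{lemma sphericity and obstruction flat PDE}, $K_{;\bar z\bar z}$ is \emph{not} real-valued: it encodes a complex PDE of order $4$, i.e., two real PDEs on the single real unknown $\phi$. This over-determinacy is exactly why we can only hope for injectivity of $\Phi_s$, not surjectivity; to obtain uniqueness of $\phi$ from its Cauchy data along $\{y=0\}$ it will suffice to work with just \emph{one} of the two real equations extracted from $K_{;\bar z\bar z}=0$.

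First I would expand $K_{;\bar z\bar z} = \partial_{\bar z}^2 K - 2\phi_{\bar z}\,\partial_{\bar z} K$ using $K = -2 e^{-2\phi}\phi_{z\bar z}$ from \eqref{Gauss curvature}, whose only term of differential order $4$ is $-2e^{-2\phi}\phi_{z\bar z\bar z\bar z}$. Converting to real coordinates $z = x+iy$ via $\partial_z = \tfrac{1}{2}(\partial_x - i\partial_y)$ and $\partial_{\bar z} = \tfrac{1}{2}(\partial_x + i\partial_y)$ gives
\begin{equation*}
\phi_{z\bar z\bar z\bar z} \;=\; \tfrac{1}{16}\bigl(\partial_x^4\phi - \partial_y^4\phi\bigr) \;+\; \tfrac{i}{8}\bigl(\partial_x^3\partial_y\phi + \partial_x\partial_y^3\phi\bigr).
\end{equation*}
Taking the real part of $K_{;\bar z\bar z}=0$ therefore produces a fourth-order real PDE of the form
\begin{equation*}
\tfrac{1}{8}\,e^{-2\phi}\,\frac{\partial^4\phi}{\partial y^4} \;=\; G\Bigl(x,y,\;\frac{\partial^{i+j}\phi}{\partial x^i\partial y^j}\Big|_{i+j\leq 4,\; j<4}\Bigr),
\end{equation*}
where $G$ is real analytic in all its arguments (the lower-order contributions from $-2e^{-2\phi}\phi_{z\bar z\bar z\bar z}$ and from $-2\phi_{\bar z}\partial_{\bar z} K$ are polynomials in lower-order derivatives of $\phi$ times real-analytic functions of $\phi$). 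Since the coefficient $\tfrac{1}{8}e^{-2\phi}$ is nowhere zero, dividing through puts this PDE into Cauchy--Kowalevski normal form, so the hypersurface $\{y=0\}$ is non-characteristic.

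Given this, the injectivity of $\Phi_s\colon \Gamma_s\to (\mathcal{C}_0^{\omega}(\mathbb{R}))^4$ is immediate: if $\phi_1,\phi_2\in\Gamma_s$ satisfy $\Phi_s(\phi_1) = \Phi_s(\phi_2)$, then $\phi_1$ and $\phi_2$ are both real-analytic solutions of the real part of $K_{;\bar z\bar z}=0$ on a common neighborhood of the origin, sharing identical Cauchy data $(\phi,\partial_y\phi,\partial_y^2\phi,\partial_y^3\phi)\big|_{y=0}$ along the non-characteristic hypersurface $\{y=0\}$. The uniqueness part of the Cauchy--Kowalevski theorem then forces $\phi_1=\phi_2$ as germs at the origin. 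The only genuinely new step compared to Lemma \ref{lem parameterization of obstruction flat hypersurfaces} is the non-characteristic check, namely verifying that the coefficient of $\partial_y^4\phi$ in $\mathrm{Re}(K_{;\bar z\bar z})$ does not vanish, which reduces to the elementary expansion displayed above.
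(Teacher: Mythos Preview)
Your proposal is correct and follows essentially the same approach as the paper: both take the real part of $K_{;\bar z\bar z}=0$, verify that the coefficient of $\partial_y^4\phi$ is a nonvanishing multiple of $e^{-2\phi}$ (so $\{y=0\}$ is non-characteristic), and then invoke the uniqueness part of Cauchy--Kowalevski. The only cosmetic difference is that the paper first writes the system \eqref{PDE in local coordinates spherical} in terms of $K$ and then substitutes \eqref{Gauss curvature}, whereas you expand directly in $\phi$; your added remark explaining why only injectivity (not surjectivity) is expected is a helpful clarification but not part of the logical argument.
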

\begin{proof}
Recall that by Proposition \ref{sphericity and obstruction flatness characterization Prop}, $M_{\phi}$ is spherical if and only if $K_{;\bar{z}\bar{z}}=0$. By a straightforward computation, we can express $K_{;\bar{z}\bar{z}}$ in local coordinates:
	\begin{equation*}
		K_{;\bar{z}\bar{z}}=\partial^2_{\bar{z}}K-2\,\partial_{\bar{z}}\phi \cdot \partial_{\bar{z}}K.
	\end{equation*}
	We further write it in terms of real variables $(x, y)$:
	\begin{align*}
		K_{;\bar{z}\bar{z}}=&\frac{1}{4}(\partial_x+i\partial_y)^2K-\frac{1}{2}\,(\partial_x+i\partial_y)\phi \cdot (\partial_x+i\partial_y)K\\
		=&\frac{1}{4}\partial_x^2K-\frac{1}{4}\partial_y^2K-\frac{1}{2}\partial_x\phi\cdot \partial_xK+\frac{1}{2}\partial_y\phi\cdot\partial_yK+\frac{i}{2}\bigl(\partial_x\partial_yK-\partial_x\phi\cdot\partial_yK-\partial_y\phi\cdot\partial_xK\bigr).
	\end{align*}
	Thus, $K_{;\bar{z}\bar{z}}=0$ if and only if
	\begin{equation}\label{PDE in local coordinates spherical}
		\begin{dcases}
			\partial_x^2K-\partial_y^2K-2\partial_x\phi\cdot \partial_xK+2\partial_y\phi\cdot\partial_yK=0\\
			\partial_x\partial_yK-\partial_x\phi\cdot\partial_yK-\partial_y\phi\cdot\partial_xK=0,
		\end{dcases}
	\end{equation}
where $K$ is given by \eqref{Gauss curvature}. The first equation in \eqref{PDE in local coordinates spherical} can be written into the form
	\begin{equation*}
		\frac{\partial^4\phi}{\partial y^4}=F\Bigl(x, y, \frac{\partial^{i+j}\phi}{\partial x^i \partial y^j}\Big|_{i+j\leq 4, j<4} \Bigr)
	\end{equation*}
	for some real analytic function $F$. Similarly as in the proof of Lemma \ref{lem parameterization of obstruction flat hypersurfaces}, by the Cauchy-Kowalevski theorem $\Phi_s$ is injective.
\end{proof}

We are now ready to prove Theorem \ref{existence of obstruction flat hypersurface with transverse symmetry Prop}.
\begin{proof}[Proof of Theorem \ref{existence of obstruction flat hypersurface with transverse symmetry Prop}]
	Given an element $v\in (\mathcal{C}^{\omega}_0(\mathbb{R}))^4$, by the Cauchy-Kowalevski theorem, there exists some real analytic function $\phi_0$ near the origin of $\mathbb{C}$ satisfying the first equation in \eqref{PDE in local coordinates spherical} and $\Phi_s(\phi_0)=v$. For this solution $\phi_0$, $\Phi_{ob}(\phi_0)+(0, 0, 0, 0, 0, 1)$ gives an element in $(\mathcal{C}^{\omega}_0(\mathbb{R}))^6$, by Lemma \ref{lem parameterization of obstruction flat hypersurfaces} there exists some real analytic function $\phi$ near the origin of $\mathbb{C}$ satisfying \eqref{PDE in local coordinates} and
	\begin{equation}\label{modify the jet data}
		\Phi_{ob}(\phi)=\Phi_{ob}(\phi_0)+(0, 0, 0, 0, 0, 1).
	\end{equation}
	
	Since $\phi$ satisfies \eqref{PDE in local coordinates}, the hypersurface $M_{\phi}$ in $\mathbb{C}^2$ is obstruction flat. As $M_{\phi}$ is a circle bundle, it is of transverse symmetry. Lastly, we check that $M_{\phi}$ is not spherical. Suppose $M_{\phi}$ is spherical. Then both $\phi$ and $\phi_0$ satisfy the first equation in \eqref{PDE in local coordinates spherical}. We also note that $\frac{\partial^j\phi}{\partial y}|_{y=0}=\frac{\partial^j\phi_0}{\partial y}|_{y=0}$ for $0\leq j\leq 3$ by \eqref{modify the jet data}. Thus, the Cauchy-Kowalevski theorem implies that $\phi=\phi_0$ as they are both real analytic. However, this contradicts \eqref{modify the jet data}. So the proof is completed.
\end{proof}

\section{The existence of obstruction flat points}\label{sec:5}
The  following questions originate in the work of S.-S. Chern and J. K. Moser \cite{ChMo74}. Are there compact strongly pseudoconvex 3-dimensional CR hypersurfaces without CR umbilical points? Can such a CR hypersurface exist in $\mathbb{C}^2$? The answers to both questions turn out to be affirmative: An example in $\mathbb{CP}^2$ was actually found earlier by E. Cartan in \cite{Car33} but it is not embeddable into $\mathbb{C}^2$. More recently, D. Son, D. Zaitsev and the first-named author \cite{ESZ18} constructed an example in $\mathbb{C}^2$. On the other hand, when $M$ is the circle bundle of a negative line bundle over a compact Riemann surface $X$, D. Son and the first-named author \cite{ES17} proved that $M$ has at least a circle of CR umbilical points provided that $X$ is not a torus. Here, we prove an analogous result for  obstruction flat points.

\begin{thm}\label{existence of obstruction flat points on circle bundle Prop}
	Let $(L, h)$ be a negative line bundle over a compact Riemann surface $X$, so that the dual bundle $(L^*, h^{-1})$ of $(L, h)$ induces a \k metric $g$ on $X$. Let $M$ denote the unit circle bundle in $(L, h)$. Then
\begin{itemize}
\item[$(1)$] $M$  has at least a circle of obstruction flat points. 
\item[$(2)$] Let $\mathcal{O}$ be the obstruction function of $M$. If $\mathcal{O}\geq 0$ on $M$ or $\mathcal{O}\leq 0$ on $M$, then $\mathcal{O}\equiv 0$ on $M$.  Consequently, $M$ is spherical.
\end{itemize}
\end{thm}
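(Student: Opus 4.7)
The plan is to reduce both assertions to a statement about the Gauss curvature $K$ of the induced Kähler metric $g$ on the compact Riemann surface $X$, and then apply the divergence theorem.

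First I would observe that the fibrewise $U(1)$-action preserves the CR structure of $M$, so the obstruction function $\mathcal O$ is $U(1)$-invariant and descends to a function on $X$. By Proposition \ref{sphericity and obstruction flatness characterization Prop} combined with Lemma \ref{lemma sphericity and obstruction flat PDE}, this descended function agrees, up to a nowhere vanishing real-valued factor (the product of $e^{4\phi}/4$ with the factor from Proposition \ref{sphericity and obstruction flatness characterization Prop}), with
\[
F \;:=\; \Delta_g^2 K + \Delta_g K^2 \;=\; \Delta_g\bigl(\Delta_g K + K^2\bigr).
\]
Thus $\mathcal O$ vanishes on the fiber $\pi^{-1}(x)$ if and only if $F(x)=0$, and the sign of $\mathcal O$ on any fiber is determined by that of $F$ at the corresponding base point.

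For part $(1)$, I would use the fact that $F$ is the $g$-Laplacian of a smooth function on the compact closed manifold $X$, so the divergence theorem yields
\[
\int_X F\, dV_g \;=\; \int_X \Delta_g\bigl(\Delta_g K + K^2\bigr)\, dV_g \;=\; 0.
\]
If $F\not\equiv 0$, then by continuity on the connected manifold $X$ it must take both positive and negative values, and the intermediate value theorem produces a zero $x_0 \in X$. Either way there exists $x_0 \in X$ with $F(x_0)=0$, and by $U(1)$-invariance the entire fiber $\pi^{-1}(x_0) \subset M$ consists of obstruction flat points, giving a circle as claimed.

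For part $(2)$, if $\mathcal O$ has constant sign on $M$, then so does $F$ on $X$. Combined with $\int_X F\, dV_g=0$ and compactness of $X$, this forces $F \equiv 0$, hence $\mathcal O \equiv 0$, i.e., $M$ is obstruction flat. The conclusion that $M$ is spherical then follows from the result of the first-named author \cite{Ebenfelt18} quoted in the introduction: any compact, strongly pseudoconvex, $3$-dimensional CR manifold with transverse symmetry that is obstruction flat must be spherical.

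I do not anticipate a substantive obstacle: the heart of the argument is simply that $F$ is a pure divergence and therefore integrates to zero on the compact $X$. The only small bookkeeping step is to confirm that the proportionality between $\mathcal O$ and $F$ is by a factor of definite (positive, say) sign, so that both the location of zeros and the pointwise sign transfer cleanly between the two.
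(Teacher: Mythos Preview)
Your proposal is correct and follows essentially the same route as the paper: reduce to $F=\Delta_g^2K+\Delta_gK^2$ on $X$ via Proposition~\ref{sphericity and obstruction flatness characterization Prop} and Lemma~\ref{lemma sphericity and obstruction flat PDE}, use that $F$ lies in the image of $\Delta_g$ so $\int_X F\,dV_g=0$, and conclude by continuity and the $S^1$-action for (1), and by \cite{Ebenfelt18} for the sphericity in (2). The paper orders the two parts oppositely (proving (2) first and deducing (1) from it), but the content is identical; your closing worry about the sign of the proportionality factor is harmless, since the factor is continuous and nowhere vanishing on the connected manifold $M$ and hence of constant sign, which is all that is needed.
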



\begin{proof}
We first prove part (2). By Proposition \ref{sphericity and obstruction flatness characterization Prop} and Lemma \ref{lemma sphericity and obstruction flat PDE}, the obstruction function $\mathcal{O}$ is $\Delta_g^2K+\Delta_g K^2$ up to some nowhere vanishing function. By the assumption in (2), we have either $\Delta_g^2K+\Delta_g K^2\geq 0$ or $\Delta_g^2K+\Delta_g K^2\leq 0$ on $X$. On the other hand, as $\Delta_g^2K+\Delta_g K^2$ is in the image of the Laplace operator $\Delta_g$, we have $\int_X \Delta_g^2K+\Delta_g K^2 dV_g=0$. Therefore, $\Delta_g^2K+\Delta_g K^2\equiv 0$ on $X$ and thus $\mathcal{O}\equiv 0$ on $M$. The last assertion follows from \cite{Ebenfelt18}.
	
To prove part (1),  we note that by part (2), if $\mathcal{O} \not \equiv 0,$ then there are points where $\mathcal{O}$ is positive and points where it is negative; so is $\Delta_gK+\Delta_gK^2$ on $X$. The existence of obstruction flat point then follows by the continuity of the latter function. As the $S^1$ action on $M$ is CR diffeomorphic, we obtain an orbit of obstruction flat points. So the proof is completed.
\end{proof}

In the higher dimensional case, there exist compact CR hypersurfaces with no CR umbilical points. Webster \cite{Web00} proved that a real ellipsoid without circular sections in $\mathbb{C}^{n+1}$ with $n\geq 2$ has no CR umbilical points. Note that such real ellipsoids do not admit a transverse symmetry. However, the following result shows there also exist compact unit circle bundles $M$ (which in particular have transverse symmetry) of CR dimension $n\geq 2$ with no CR umbilical points.

\begin{prop}\label{circle bundle with no CR umbilical points Prop}
	Let $(L, h)$ be a negative line bundle over a compact complex manifold $X$ of complex dimension $n\geq 2$, so that the dual bundle $(L^*, h^{-1})$ of $(L, h)$ induces a \k metric $g$ on $X$. Assume the universal covering $\widetilde{X}$ of $X$ with the pullback metric $\widetilde{g}$ of $g$ is a homogeneous \k manifold. Then the circle bundle $M$ of $(L, h)$ has no CR umbilical points provided $(\widetilde{X}, \widetilde{g})$ is not any of the following:
	\begin{itemize}
		\item [(1)] $(\mathbb{B}^n, \lambda \,\omega_{-1})$ for some $\lambda\in \mathbb{R}^+$,
		\item [(2)] $(\mathbb{CP}^n, \lambda\, \omega_1)$ for some $\lambda\in \mathbb{R}^+$,
		\item [(3)] $(\mathbb{C}^n, \omega_0)$,
		\item [(4)] $(\mathbb{B}^l\times \mathbb{CP}^{n-l}, \lambda \omega_{-1}\times \lambda\omega_1)$ for some $1\leq l\leq n-1$ and some $\lambda\in\mathbb{R}^+$.
	\end{itemize}
	Here $\omega_c$ denotes the \k metric with constant holomorphic sectional curvature $c$.
\end{prop}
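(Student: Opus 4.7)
The plan is to reduce the statement to a classification of Bochner-flat homogeneous K\"ahler manifolds. First, I would identify the relevant CR invariant: since $\dim_{\bC} X = n \geq 2$, the circle bundle $M$ has CR dimension $n \geq 2$, so CR umbilicality at a point $p \in M$ is controlled by the vanishing of the Chern--Moser curvature tensor $A^0_{2\bar 2}$ (see Section \ref{sec:2}). For a circle bundle over a K\"ahler base, Webster's computations of the pseudo-Hermitian invariants (cf.\ also \cite{Ebenfelt18, EXX22}) identify the Chern--Moser tensor of $M$ at $p$, up to a nonzero universal factor, with the Bochner curvature tensor $B$ of $(X, g)$ evaluated at $\pi(p) \in X$, where $\pi : M \to X$ is the bundle projection. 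Thus $p$ is CR umbilical if and only if $B$ vanishes at $\pi(p)$.

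With this reduction, the argument would proceed via homogeneity and classification. The universal covering map $\widetilde{X} \to X$ is a local isometry, so the Bochner tensor of $(\widetilde{X}, \widetilde{g})$ is the pullback of that of $(X, g)$. Homogeneity of $(\widetilde{X}, \widetilde{g})$ means its group of holomorphic isometries acts transitively, which forces the pointwise norm $|B|_{\widetilde{g}}^2$ to be constant on $\widetilde{X}$. Consequently, the Bochner tensor either vanishes identically on $\widetilde{X}$ or is nowhere zero there, and the same dichotomy transfers to $(X, g)$. If $M$ had any CR umbilical point, the first alternative would have to hold, making $(\widetilde{X}, \widetilde{g})$ a simply connected, homogeneous, Bochner-flat K\"ahler manifold.

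The last step is to invoke the classification of simply connected, homogeneous, Bochner-flat K\"ahler manifolds (Matsumoto--Tanno, with subsequent refinements by Bryant and others), which asserts that such manifolds are precisely the four types listed in the statement: $(\bB^n, \lambda \omega_{-1})$, $(\mathbb{CP}^n, \lambda \omega_1)$, $(\bC^n, \omega_0)$, and the Bochner-flat products $(\bB^l \times \mathbb{CP}^{n-l}, \lambda \omega_{-1} \times \lambda \omega_1)$ with matched curvatures. Since $(\widetilde{X}, \widetilde{g})$ is assumed not to be one of these, $M$ can have no CR umbilical point.

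The main obstacle I anticipate is pinning down the pointwise proportionality between the Chern--Moser tensor on $M$ and the Bochner tensor on $(X, g)$: one must verify that this identification is not merely ``up to lower-order curvature terms'' but truly a genuine pointwise scalar multiple, so that the CR umbilical locus in $M$ corresponds precisely to the Bochner-flat locus in $X$. The homogeneity step is then immediate, and the classification step reduces to citing the appropriate structural theorem for homogeneous Bochner-flat K\"ahler manifolds.
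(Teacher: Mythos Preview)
Your proposal is correct and follows essentially the same route as the paper. The paper phrases the dichotomy step slightly differently---arguing that local homogeneity of $(X,g)$ makes $M$ a locally homogeneous CR manifold, so the Chern--Moser tensor either vanishes everywhere or nowhere---whereas you descend to the base via Webster's identification with the Bochner tensor; both then finish by citing the Webster--Bryant--Wang classification of spherical circle bundles (equivalently, homogeneous Bochner-flat K\"ahler manifolds), which is exactly the list (1)--(4).
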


\begin{proof}
	Sine $(\widetilde{X}, \widetilde{g})$ is a homogeneous \k manifod, $(X, g)$ is a locally homogeneous \k manifold, and consequently $M$ is a locally homogeneous CR manifold. Therefore, $M$ is either spherical (i.e., CR umbilical everywhere) or has no CR umbilical point at all. Following the work of \cite{Web78}, \cite{Bry01} and \cite{Wang} (cf. Proposition 1.12 in \cite{EXX22}), the sphericity of $M$ implies $(\widetilde{X}, \widetilde{g})$ is one in the list (1)-(4). So the proof is completed.
\end{proof}

\begin{rmk}
	Note that  \k manifolds $(X, g)$ satisfying the conditions in Proposition \ref{circle bundle with no CR umbilical points Prop} are abundant. For examples, $(X, g)$ can be any compact homogeneous Hodge manifold (i.e., simply connected homogeneous \k manifold) other than $\mathbb{CP}^n$. In particular, $(X, g)$ can be any compact Hermitian symmetric spaces (e.g. $\mathbb{CP}^{n_1}\times \mathbb{CP}^{n_2}$ or any Grassmannian manifolds) other than $\mathbb{CP}^n$.
\end{rmk}

\begin{rmk}
	In Proposition \ref{circle bundle with no CR umbilical points Prop}, as $(\widetilde{X}, \widetilde{g})$ is assumed to be homogeneous, $(X, g)$ is locally homogeneous. It follows that $(X, g)$ has constant Ricci eigenvalues. By Theorem 1.1 in \cite{EXX22}, the circle bundles constructed in Proposition \ref{circle bundle with no CR umbilical points Prop} are actually always obstruction flat while having no CR umbilical points.
\end{rmk}

Motivated by the question of existence of CR umbilical points mentioned at the beginning of this section, as well as Theorem \ref{existence of obstruction flat points on circle bundle Prop} and the osculation result Theorem \ref{thm:obsosc}, one may ask: Do compact strongly pseudoconvex CR hypersurfaces always have obstruction flat points? We suspect that, unlike the situation with CR umbilical points, the answer would be affirmative. Theorem \ref{existence of obstruction flat points on circle bundle Prop} provides an affirmative answer in a special case (and in this case the CR manifold has transverse symmetry). We note that there is a significant difference between the question of existence of CR umbilical points versus that of obstruction flat points in that the former involves the vanishing of a curvature tensor, a problem that becomes increasingly "overdetermined" (for $\dim M\geq 7$, cf. \cite{EZ19}) as the dimension increases, while the latter is simply the vanishing of a single real-valued function. Nevertheless, the existence of obstruction flat points is interesting from the point of view of osculation by "model" (obstruction flat) hypersurfaces.



\bibliographystyle{amsalpha.bst}
\bibliography{references}

\end{document}